\theoremstyle{plain}
\newtheorem{theorem}{Theorem}[section]
\newtheorem{lemma}[theorem]{Lemma}
\newtheorem{corollary}[theorem]{Corollary}
\theoremstyle{definition}
\newtheorem{remark}{\textup{Remark}} %\textup for ``Remark'' is required
\numberwithin{equation}{section}
\title[Characterizations of (Jordan) derivation on Banach algebra with local actions]{Characterizations of (Jordan) derivation on Banach algebra with local actions}
\author[]{Jiankui Li}
\author[]{Shan Li}
\author[]{Kaijia Luo}
\address{Jiankui Li, East China University  of Science and Technology, Shanghai, 200237, China}
\email{jkli@ecust.edu.cn}
\address{Shan Li, Jiangsu University of Technology, Changzhou, 213001, China}
\email{lishan\_math@163.com}
\address{Kaijia Luo, East China University  of Science and Technology, Shanghai, 200237, China}
\email{kaijia\_luo@163.com}
\keywords{Banach algebra, Derivation, Jordan derivation, Separating point}
\subjclass{47B47, 15A86}
\begin{document}

\begin{abstract}
Let $\mathcal{A}$ be a unital Banach $*$-algebra and $\mathcal{M}$ be a unital $*$-$\mathcal{A}$-bimodule. If $W$ is a left  separating point of $\mathcal{M}$, we show that every $*$-derivable mapping at $W$ is a Jordan derivation, and every $*$-left derivable mapping at $W$ is a Jordan left derivation under the condition $W \mathcal{A}=\mathcal{A}W$. Moreover we give a complete description of linear mappings $\delta$ and $\tau$ from $\mathcal{A}$ into $\mathcal{M}$ satisfying $\delta(A)B^*+A\tau(B)^*=0$ for any $A, B\in \mathcal{A}$ with $AB^*=0$ or
$\delta(A)\circ B^*+A\circ\tau(B)^*=0$  for any $A, B\in \mathcal{A}$ with $A\circ B^*=0$, where $A\circ B=AB+BA$ is the Jordan product.

\end{abstract}

\maketitle

\section{Introduction}
Let $\mathcal{A}$ be an algebra over a real or complex field $K$ and $\mathcal{M}$ be an $\mathcal{A}$-bimodule. A linear mapping $\delta$ from $\mathcal{A}$ into $\mathcal{M}$ is called a \emph{derivable mapping at $W$ }if $\delta(W)=A\delta(B)+\delta(A)B$ for all $A ,B\in \mathcal{A}$ with $AB=W$.
As is well known, the problem of linear mappings preserving fixed products is a very interesting item in the field of operator algebra.
Derivations that can be completely determined by the local action on some subsets of algebra  have attracted attention of many researchers.
Here, we only focus on derivable mappings at special points.
There are a considerable number of influential results on derivable mappings at zero, unit, invertible elements, separating points, idempotent elements and so on.
In some results, a derivable mapping at zero is described in terms of a generalized derivation.
Jing, Lu and Li \cite{JLL} described  bounded derivable mappings at zero on von Neumann algebras.
Bre\v{s}ar \cite{B} studied additive derivable mappings at zero on rings with nontrivial idempotents.
In \cite{L}, Lu considered continuous derivable mappings from a unital Banach algebra into its unital Banach bimodule at left or right separating points.
Furthermore, the first author of the paper and Zhou \cite{LZ} generalized this result without the assumption of continuity.

There is also a related variation of derivable mappings  on  $*$-algebras. A linear mapping $\delta$ from a $*$-algebra $\mathcal{A}$ into a $*$-$\mathcal{A}$-bimodule $\mathcal{M}$ is a \emph{$*$-derivable mapping at $W$}, if
\begin{align*}
A,B\in \mathcal{A},\ AB^*=W &\quad \Longrightarrow \quad \delta(W)=A\delta(B)^*+\delta(A)B^*.\tag{$\mathbb{D}_W^*$}
\end{align*}
$*$-derivable mappings at zero  were studied on unital $*$-algebras, von Neumann algebras, $C^*$-algebras, and zero product determined $*$-algebras in \cite{AHL2,FFG,FG}.
Fadaee, Fallahi and Ghahramani \cite{FFG} considered  continuous $*$-derivable mappings at left separating points from a unital Banach $*$-algebra into its Banach $*$-bimodule.
In \cite{AHL}, An, He and the first author of this paper proved that every (continuous) $*$-derivable mapping  from a (unital $C^*$-algebra) factor von Neumann algebra into its Banach $*$-bimodule is a $*$-derivation if and only if $W$ is a left separating point. Recall that a derivation $\delta$ from $\mathcal{A}$ into $\mathcal{M}$ is a \emph{$*$-derivation} if $\delta(A^*) = \delta(A)^*$ for each $A$ in $\mathcal{A}$.
Our first goal is to give a description of linear (unnecessary continuous assumption) mappings satisfying $\mathbb{D}_W^*$ from a Banach $*$-algebra into its $*$-bimodule.

Some natural development of usual $*$-derivable mappings is $*$-left derivable mappings. A linear mapping $\delta$ from a $*$-algebra $\mathcal{A}$ into a $*$-$\mathcal{A}$-bimodule $\mathcal{M}$  is a \emph{$*$-left derivable mapping at $W$}, if
\begin{align*}
A,B\in \mathcal{A},\ AB^*=W &\quad \Longrightarrow  \quad \delta(W)=A\delta(B)^*+B^*\delta(A).\tag{$\mathbb{L}_W^*$}
\end{align*}
In \cite{AHL}, the authors showed that every continuous $*$-left derivable mapping at  separating points from a unital $C^*$-algebra or a von Neumann algebra into its unital Banach $*$-bimodule is identical with zero under certain conditions.
Our second aim is to give the representation of linear mappings satisfying the property $\mathbb{L}^*_W$.

Based on above facts, there is a related problem: how to characterize two linear mappings $\delta$ and $\tau$  from $\mathcal{A}$ into $\mathcal{M}$ satisfying $\delta(A)B+A\tau(B)=0$ for each $A, B \in \mathcal{A}$ with $AB=0$ ? Benkovi\v{c} and Gra\v{s}i\v{c} \cite{BG} studied this problem on unital zero product determined algebras (see Section 4).
The first two authors of this paper \cite{LL} considered it from $\mathcal{A}$ into $\mathcal{M}$ with the property $\mathbb{P}$ (see Section 4).
 Next, we consider linear mappings $\delta$ and $\tau$ from a $*$-algebra $\mathcal{A}$ into a $*$-$\mathcal{A}$-bimodule $\mathcal{M}$ satisfying
\begin{align*}
A,B\in \mathcal{A},\ AB^*=0 &\quad \Longrightarrow  \quad 0=\delta(A)B^*+A\tau(B)^*.\tag{$\mathbb{T}_0^*$}
\end{align*}
To our knowledge, a few work has been done up to now. Our next goal is to characterize linear mappings satisfying $\mathbb{T}_0^*$.

Meanwhile, some papers \cite{ABEV,G2,Liu} gave the structure of derivable mappings at zero Jordan products. Jordan product is denoted by `` $\circ$ " : $A\circ B=AB + BA$. A linear mapping $\delta$ from $\mathcal{A}$ into $\mathcal{M}$ is a \emph{Jordan derivable mapping at zero Jordan products} if $\delta(A)\circ B+A\circ \delta(B)=0$ for each $A, B$ in $\mathcal{A}$ with $A\circ B=0$.
 Our last target is to discuss two linear mappings $\delta$ and $\tau$ from a $*$-algebra $\mathcal{A}$ into a $*$-$\mathcal{A}$-bimodule $\mathcal{M}$ satisfying
 \begin{align*}
 A,B\in \mathcal{A},\ A\circ B^*=0& \quad \Longrightarrow \quad  0=\delta(A)\circ B^*+A\circ\tau(B)^*.\tag{$\mathbb{J}_0^*$}
\end{align*}

Let us fix some more notations.
A linear mapping $\delta$ from $\mathcal{A}$ into $\mathcal{M}$ is called a \emph{derivation} if $\delta(AB)=\delta(A)B+A\delta(B)$ for each $A,B \in \mathcal{A}$; $\delta$ is called a \emph{Jordan derivation} if $\delta(A^2)=\delta(A)A+A\delta(A)$  for each $A \in \mathcal{A}$, which is equivalent to $\delta(A\circ B)=\delta(A)\circ B+A\circ \delta(B)$ for each $A, B$ in $\mathcal{A}$.
It is clear that every derivation is a Jordan derivation. But the reverse is not always true.
Besides, a linear mapping $\delta$ from $\mathcal{A}$ into $\mathcal{M}$ is called a \emph{Jordan left derivation} if $\delta(A^2)=2A\delta(A)$ for each $A \in \mathcal{A}$.
An element $W$ in $\mathcal{A}$ is a \emph{left (right) separating point} of $\mathcal{M}$ if $WM=0$ (or $MW=0$) implies $M=0$ for each $M\in \mathcal{M}$. $W$ is called a \emph{separating point} if $W$ is both a left separating point and a right separating point. It is easy to see that left (right) invertible elements in $\mathcal{A}$ are left (right) separating points of $\mathcal{M}$, and invertible elements in $\mathcal{A}$ are separating points of $\mathcal{M}$.

\section{Property $\mathbb{D}_W^*$}\label{sec2}
In this section, we discuss a linear mapping from a unital Banach $*$-algebra $\mathcal{A}$ into a unital $*$-$\mathcal{A}$-bimodule $\mathcal{M}$ satisfying $\mathbb{D}_W^*$ without continuity.
The following lemma is very important in the proofs of our results.

\begin{lemma}\label{lem1}\cite[Lemma 2.1]{LZ}
If $\delta$ is a linear mapping from a unital Banach algebra $\mathcal{A}$ into a unital $\mathcal{A}$-bimodule $\mathcal{M}$ such that $A\delta(A^{-1})+\delta(A)A^{-1}=0$ for any invertible element $A$ in $\mathcal{A}$, then $\delta$ is a Jordan derivation.
\end{lemma}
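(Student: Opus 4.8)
The plan is to extract from the hypothesis only what it literally provides—the action of $\delta$ on sums (linearity) and on inverses—and to feed these into an algebraic identity that expresses a product purely in terms of sums and inverses. First I would record two immediate consequences. Putting $A=1$ in $A\delta(A^{-1})+\delta(A)A^{-1}=0$ gives $2\delta(1)=0$, hence $\delta(1)=0$. Replacing $A$ by $A^{-1}$ and rearranging gives the inverse formula
\[
\delta(A^{-1})=-A^{-1}\delta(A)A^{-1}\qquad(A\text{ invertible}),
\]
which is in fact equivalent to the hypothesis and says precisely that $\delta$ respects the relation $AA^{-1}=1$.

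The engine of the proof is Hua's identity
\[
ABA=A-\bigl(A^{-1}+(B^{-1}-A)^{-1}\bigr)^{-1},
\]
valid whenever $A$, $B$, $B^{-1}-A$ and $A^{-1}+(B^{-1}-A)^{-1}$ are all invertible; in a unital Banach algebra these four conditions hold simultaneously on a nonempty open set of pairs $(A,B)$ (they hold, for instance, near $(2\cdot 1,2\cdot 1)$), since $1+N$ is invertible whenever $\|N\|<1$. The crucial feature is that the right-hand side is assembled from $A$ and $B$ using only linear operations and inversions, with no un-inverted product of two unknown elements. I would therefore apply $\delta$ to both sides: the left-hand side is the unknown $\delta(ABA)$, while the right-hand side is expanded using linearity and the inverse formula, each inversion contributing a term of the shape $X^{-1}\delta(X)X^{-1}$ with $X$ a known invertible combination of $A,B$ acting on $\mathcal{M}$ from both sides. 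The expected outcome, after collapsing the resulting expression with the algebraic relations $(B^{-1}-A)^{-1}=B(1-AB)^{-1}$ and $\bigl(A^{-1}+(B^{-1}-A)^{-1}\bigr)^{-1}=A(1-BA)$ coming from Hua's identity itself, is the Jordan triple identity
\[
\delta(ABA)=\delta(A)BA+A\delta(B)A+AB\delta(A)
\]
for all $(A,B)$ in that open set.

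I anticipate this simplification to be the main obstacle: it is a finite but intricate manipulation in the bimodule $\mathcal{M}$, in which the numerous terms produced by the inverse formula must be made to cancel down to the three clean terms above, and one must keep careful track of left versus right module actions throughout.

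Once the triple identity holds on an open set, the remaining step is a routine extension by polynomiality. For fixed $A$ in the (open, nonempty) first projection of that set, the map $B\mapsto\delta(ABA)-\delta(A)BA-A\delta(B)A-AB\delta(A)$ is linear and vanishes on a nonempty open set of $B$'s, hence vanishes for every $B$. For fixed but arbitrary $B$, the same expression is a homogeneous quadratic map of $A$ vanishing on a nonempty open set, so restricting to lines $A_0+tA_1$ and using that a one-variable polynomial vanishing on an interval is identically zero, it vanishes for every $A$. Thus the triple identity holds on all of $\mathcal{A}\times\mathcal{A}$. Finally, setting $B=1$ and using $\delta(1)=0$ yields $\delta(A^2)=\delta(A)A+A\delta(A)$ for every $A\in\mathcal{A}$, that is, $\delta$ is a Jordan derivation.
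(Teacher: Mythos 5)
The paper does not actually prove this lemma: it is imported verbatim as \cite[Lemma 2.1]{LZ}, so there is no in-paper argument to compare against. Judged on its own, your proof is correct, and it is a genuinely different (and heavier) route than the one used in the cited source. The Hua-identity computation that you flag as the main obstacle does close up: writing $C=B^{-1}-A$ and $D=A^{-1}+C^{-1}$, one has $D^{-1}=A-ABA=(I-AB)A=A(I-BA)$ and $C^{-1}=(I-BA)^{-1}B=B(I-AB)^{-1}$, whence $D^{-1}C^{-1}=AB$ and $C^{-1}D^{-1}=BA$; applying your inverse formula to $\delta(ABA)=\delta(A)-\delta(D^{-1})=\delta(A)+D^{-1}\delta(D)D^{-1}$ and expanding $\delta(D)=-A^{-1}\delta(A)A^{-1}-C^{-1}\delta(C)C^{-1}$ with $\delta(C)=-B^{-1}\delta(B)B^{-1}-\delta(A)$ gives
\begin{align*}
\delta(ABA)&=\delta(A)-(I-AB)\delta(A)(I-BA)+A\delta(B)A+AB\delta(A)BA\\
&=\delta(A)BA+A\delta(B)A+AB\delta(A),
\end{align*}
as you predicted, and your two-stage linear/quadratic extension from the open set near $(2I,2I)$ to all of $\mathcal{A}\times\mathcal{A}$ is sound. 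What this costs you is the full Jordan triple identity plus a density argument, when only $\delta(A^2)=\delta(A)A+A\delta(A)$ is needed. The standard proof (the one in \cite{LZ} and in Lu's paper \cite{L}) is shorter: from $\delta(A^{-1})=-A^{-1}\delta(A)A^{-1}$ and the resolvent-type identity $A^{-1}+(I-A)^{-1}=(A-A^2)^{-1}$, valid when $A$ and $I-A$ are invertible, one gets directly $\delta(A)-\delta(A^2)=(I-A)\delta(A)(I-A)+A\delta(I-A)A$, which simplifies (using $\delta(I)=0$) to $\delta(A^2)=\delta(A)A+A\delta(A)$; this is then extended to arbitrary $A$ by substituting $nI+A$ for $n\geq\|A\|+2$ and comparing coefficients of $n$ --- the same polynomial trick you use, but in one variable and with no triple products. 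Your argument buys nothing extra here, so you should either carry out the Hua computation explicitly or switch to the shorter identity; as written, the collapse to the triple identity is asserted rather than executed, which is the only (reparable) gap.
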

\begin{theorem}\label{thD1*}
Suppose that $\mathcal{A}$ is a unital Banach $*$-algebra, $\mathcal{M}$ is a unital $*$-$\mathcal{A}$-bimodule, and $W$ is a left separating point of $\mathcal{M}$.
If $\delta$ is a linear mapping from $\mathcal{A}$ into $\mathcal{M}$ satisfying  $\mathbb{D}_W^*$, then $\delta$ is a Jordan derivation and satisfies $\delta(WA)=\delta(W)A+ W\delta(A^*)^*$ for each $A$ in $\mathcal{A}$.
\end{theorem}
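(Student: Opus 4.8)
The plan is to reduce everything to the inverse-identity that feeds Lemma~\ref{lem1}. Write $I$ for the unit of $\mathcal{A}$ and introduce the linear map $\tilde{\delta}(A):=\delta(A^{*})^{*}$ (it is linear, being $*$ applied twice around the linear $\delta$). Since $AB^{*}=W$ is the same as $AC=W$ with $C=B^{*}$, the hypothesis $\mathbb{D}_{W}^{*}$ reads
\[
AC=W\ \Longrightarrow\ \delta(W)=\delta(A)C+A\tilde{\delta}(C).
\]
First I would record two normalizations. Taking $A=W,\ C=I$ gives $W\tilde{\delta}(I)=0$, whence $\tilde{\delta}(I)=0$ and $\delta(I)=0$ because $W$ is a left separating point; taking $A=I,\ C=W$ gives $\delta(W)=\tilde{\delta}(W)$, that is, $\delta(W)=\delta(W^{*})^{*}$.

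Next I would exploit the two natural families of factorizations of $W$ by invertible elements. For invertible $T$ the factorization $W=(WT)(T^{-1})$ yields, after multiplying by $T$ on the right, the right-hand rule
\[
\delta(WT)=\delta(W)T-WT\,\tilde{\delta}(T^{-1})\,T,
\]
so that the desired conclusion $\delta(WT)=\delta(W)T+W\tilde{\delta}(T)$ is \emph{equivalent} to $W\bigl[\tilde{\delta}(T)+T\tilde{\delta}(T^{-1})T\bigr]=0$, hence, by left separation, to the inverse-identity $\tilde{\delta}(T)+T\tilde{\delta}(T^{-1})T=0$. Dually, for invertible $S$ the factorization $W=S(S^{-1}W)$ gives $\delta(W)=\delta(S)S^{-1}W+S\tilde{\delta}(S^{-1}W)$, which controls $\delta$ on invertibles and $\tilde{\delta}$ on $\mathcal{A}W$.

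The heart of the argument---and the step I expect to be the main obstacle---is to pass from these $W$-anchored relations to the genuinely $W$-free inverse-identity $\delta(U)+U\delta(U^{-1})U=0$ for every invertible $U$ (equivalently $A\delta(A^{-1})+\delta(A)A^{-1}=0$). The right family alone is tautologically consistent and imposes no constraint on $\tilde{\delta}$ at invertibles, so I would couple the two families through the $*$-operation: applying $*$ to the left-family identity and using $\delta(A^{*})^{*}=\tilde{\delta}(A)$ together with $\tilde{\delta}(A^{*})^{*}=\delta(A)$ turns it into a right-family identity located at $W^{*}$, thereby producing a second, independent relation involving $\tilde{\delta}(U)$ at invertible $U$ (now tied to the values of $\delta$ on $W^{*}\mathcal{A}$). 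The crucial point is that, although $W^{*}$ need not be left separating, the left separation of $W$ supplies the companion cancellation rule $mW^{*}=0\Rightarrow m=0$ for $m\in\mathcal{M}$, since $(mW^{*})^{*}=Wm^{*}$. Combining the right family at $W$ with the $*$-transported left family and cancelling by this rule is what I expect to force the defect $\delta(U)+U\delta(U^{-1})U$ to annihilate $W^{*}$ on the right, and hence to vanish. The bookkeeping here is delicate precisely because $W$ is not assumed invertible, so only factorizations with one invertible factor are at our disposal.

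Once the inverse-identity holds for all invertible elements, Lemma~\ref{lem1} applies verbatim and shows that $\delta$ is a Jordan derivation. Finally I would feed the identity $\tilde{\delta}(T)+T\tilde{\delta}(T^{-1})T=0$ back into the right-hand rule to obtain $\delta(WT)=\delta(W)T+W\tilde{\delta}(T)=\delta(W)T+W\delta(T^{*})^{*}$ for every invertible $T$. Both sides are complex-linear in $T$, so writing an arbitrary $A\in\mathcal{A}$ as a difference of two invertibles, $A=(A+\lambda I)-\lambda I$ with $|\lambda|>\|A\|$, extends the formula to $\delta(WA)=\delta(W)A+W\delta(A^{*})^{*}$ for all $A$, which is the remaining assertion.
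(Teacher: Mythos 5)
Your setup is sound and you have correctly located the crux: the normalizations $\delta(I)=0$ and $\delta(W^*)^*=\delta(W)$ are obtained exactly as in the paper, and your observation that the target identity $\delta(WT)=\delta(W)T+W\tilde{\delta}(T)$ for invertible $T$ is, via left separation, \emph{equivalent} to the inverse-identity $\tilde{\delta}(T)+T\tilde{\delta}(T^{-1})T=0$ is correct and is essentially how the paper closes its argument. But you do not prove this crux; you flag it yourself as ``the main obstacle'' and only describe a strategy you ``expect'' to work. That is a genuine gap, and the strategy you sketch points in an unpromising direction: applying $*$ to the left family $\delta(W)=\delta(S)S^{-1}W+S\tilde{\delta}(S^{-1}W)$ and using $\tilde{\delta}(S^*)=\delta(S)^*$ merely reproduces the right family anchored at $W^*$, a relation tying $\delta$ on $W^*\mathcal{A}$ to $\tilde{\delta}$ at invertibles. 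Since $W$ and $W^*$ need not interact multiplicatively, there is no evident way to cancel this against the family at $W$; moreover the terms that actually occur have the form $mW$ or $W^*m$, not $mW^*$, so your ``companion cancellation rule'' $mW^*=0\Rightarrow m=0$ never gets anything to act on.

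Your premise that ``the right family alone is tautologically consistent and imposes no constraint'' is also false, and this is precisely where the paper gets its traction. For $B=nI+A$ with $n\geq\|A\|+2$, both $B$ and $I-B$ are invertible, hence so is $B^{-1}(I-B)=B^{-1}-I$, and one has the decomposition $WB^{-1}=W\bigl(B^{-1}(I-B)\bigr)+W$ in which \emph{both} summands are of the form $W\cdot(\text{invertible})$. Feeding this into the right-family relations \eqref{2.1} and \eqref{2.2} for the three invertibles $B$, $(I-B)^{-1}B$ and $(I-B)^{-1}$, and using linearity of $\delta$ together with $\delta(I)=0$, collapses to $W\delta(B^*)^*=\delta(WB)-\delta(W)B$; left separation and linearity then give $\delta(WA)=\delta(W)A+W\delta(A^*)^*$ for all $A$, after which the inverse-identity and Lemma \ref{lem1} finish the proof exactly as in your last paragraph. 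So the missing ingredient is this chaining of right-family factorizations through $I-B$ --- an argument entirely internal to the right family --- not a coupling of the two families through the involution.
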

\begin{proof}
From $IW=WI=W$, it follows that
$\delta(W)=\delta(I)W+{\delta(W^*)}^*$ and $\delta(W)=\delta(W)+W{\delta(I^*)}^*$.
Since $W$ is a left separating point of $\mathcal{M}$, we have  $$\delta(I)=0\quad \text{and}\quad\delta(W^*)={\delta(W)}^*.$$
For each invertible element $T$ in $\mathcal{A}$, $\delta(W)=\delta(WT^{-1}T)=\delta(WT^{-1})T+WT^{-1}{\delta(T^*)}^*$, it follows that
\begin{align}
\delta(WT^{-1})=\delta(W)T^{-1}-WT^{-1}{\delta(T^*)}^*T^{-1},\label{2.1}\\
WT^{-1}{\delta(T^*)}^*=\delta(W)-\delta(WT^{-1})T.\label{2.2}
\end{align}
Let $A\in\mathcal{A}$, $n \in \mathbb{N}$ with $n\geq \|A\|+2$, and $B=nI+A$. Then $B$ and $I-B$ are both invertible in $\mathcal{A}$. It follows from (\ref{2.1}) and (\ref{2.2}) that
\begin{align*}
WB^{-1}{\delta(B^*)}^*&=\delta(W)-\delta(WB^{-1})B\\
&=\delta(W)-\delta(WB^{-1}(I-B)+W)B\\
&=\delta(W)(I-B)-\delta(WB^{-1}(I-B))B\\
&=\delta(W)(I-B)-[\delta(W)B^{-1}(I-B)\\
&\quad -WB^{-1}(I-B){(\delta((I-B)^{-1}B)^*)}^*B^{-1}(I-B)]B\\
&=WB^{-1}(I-B){(\delta((I-B)^{-1}B)^*)}^*(I-B)\\
&=WB^{-1}(I-B){(\delta{(I-B)^{-1}}^*)}^*(I-B).
\end{align*}
Since $W$ is a left separating point of $\mathcal{M}$, one can obtain
$$B^{-1}{\delta(B^*)}^*=B^{-1}(I-B){(\delta{(I-B)^{-1}}^*)}^*(I-B).$$
Thus
\begin{align*}
{\delta(B^*)}^*=(I-B){(\delta{(I-B)^{-1}}^*)}^*(I-B).
\end{align*}
Multiplying $W$ from the left of the above equation,
\begin{align*}
W{\delta(B^*)}^*&=W(I-B){(\delta{(I-B)^{-1}}^*)}^*(I-B)\\
&=(\delta(W)-\delta(W(I-B))(I-B)^{-1})(I-B)\\
&=\delta(W)(I-B)-\delta(W(I-B))\\
&=-\delta(W)B+\delta(WB).
\end{align*}
Hence
\begin{align*}
\delta(WB)=\delta(W)B+W{\delta(B^*)}^*.
\end{align*}
It follows that
\begin{align*}
 \delta(WA)&=\delta(W)A+W{\delta(A^*)}^*
\end{align*}
for each $A$ in $\mathcal{A}$.

 For each invertible element $T$ in $\mathcal{A}$, by assumption,
\begin{align*}
\delta(W)&=\delta(WTT^{-1})=\delta(WT)T^{-1}+WT{\delta({T^{-1}}^*)}^*\\
&=(\delta(W)T+W\delta(T^*)^*)T^{-1}+WT{\delta({T^{-1}}^*)}^*\\
&=\delta(W)+W{{\delta(T^*)}^*}T^{-1}+WT{\delta({T^{-1}}^*)}^*.
\end{align*}
It means that $W{{\delta(T^*)}^*}{T^{-1}}+WT{\delta({T^{-1}}^*)}^*=0$. By assumption, ${{\delta(T^*)}^*}T^{-1}+T{\delta({T^{-1}}^*)}^*=0$. Then
\begin{align*}
\delta(T)T^{-1}+T\delta(T^{-1})=0.
\end{align*}
By Lemma \ref{lem1}, $\delta$ is a Jordan derivation.
\end{proof}

By Theorem \ref{thD1*}, we can show that the assumption of continuity of result in \cite[Theorem 2.3]{FFG} can be removed.
The following conclusion follows immediately from what we have proved.
\begin{corollary}
Suppose that $\mathcal{A}$ is a unital Banach $*$-algebra and $\mathcal{M}$ is a unital $*$-$\mathcal{A}$-bimodule.
If $\delta$ is a linear mapping from $\mathcal{A}$ into $\mathcal{M}$ satisfying  $\delta(I)=A\delta(B)^*+\delta(A)B^*$ for all $A,B$ in $\mathcal{A}$ with $AB^*=I$, then $\delta$ is a Jordan derivation.
\end{corollary}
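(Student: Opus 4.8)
The plan is to recognize this corollary as the special case $W = I$ of Theorem \ref{thD1*}, so that no independent argument is needed beyond verifying that the single hypothesis of that theorem is met. Concretely, I would first observe that the hypothesis of the corollary, namely $\delta(I) = A\delta(B)^* + \delta(A)B^*$ for all $A,B \in \mathcal{A}$ with $AB^* = I$, is literally the condition $\mathbb{D}_W^*$ evaluated at $W = I$, since in that case $\delta(W) = \delta(I)$ and the constraint $AB^* = W$ reads $AB^* = I$.

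The one point genuinely requiring checking is that $W = I$ qualifies as a left separating point of $\mathcal{M}$, as demanded by Theorem \ref{thD1*}. This is immediate from the assumption that $\mathcal{M}$ is a \emph{unital} $*$-$\mathcal{A}$-bimodule: the unit action gives $IM = M$ for every $M \in \mathcal{M}$, so $IM = 0$ forces $M = 0$, which is exactly the definition of a left separating point. Hence the identity always separates, and the structural hypothesis of the theorem is satisfied for free.

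With these two observations in place, Theorem \ref{thD1*} applies directly with $W = I$ and yields at once that $\delta$ is a Jordan derivation, completing the proof. I expect no real obstacle here: the corollary is a genuine corollary, obtained by instantiating the parameter $W$ at the unit and invoking the unitality of $\mathcal{M}$ to supply the separating-point condition automatically. The only care to take is the purely notational matching of the displayed hypothesis with the defining implication $\mathbb{D}_W^*$, so that the reader sees the two conditions coincide.
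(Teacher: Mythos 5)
Your proposal is correct and is exactly how the paper obtains this corollary: it is the instance $W=I$ of Theorem \ref{thD1*}, with the unitality of $\mathcal{M}$ guaranteeing that $I$ is a left separating point. No further comment is needed.
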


If we impose a condition that every Jordan derivation from $\mathcal{A}$ into $\mathcal{M}$ is a derivation, we can get a stronger result.

\begin{theorem}\label{thD2*}
Let $\mathcal{A}$ be a unital Banach $*$-algebra and let $\mathcal{M}$ be a unital $*$-$\mathcal{A}$-bimodule.
Suppose that every Jordan derivation from $\mathcal{A}$ into $\mathcal{M}$ is a derivation.
If $\delta$ is a linear mapping from $\mathcal{A}$ into $\mathcal{M}$ satisfying $\mathbb{D}_W^*$, then $\delta$ is a $*$-derivation if and only if $W$ is a left separating point of $\mathcal{M}$.
\end{theorem}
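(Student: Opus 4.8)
The plan is to read the biconditional in the sense of the cited result \cite{AHL}: the assertion ``every linear mapping satisfying $\mathbb{D}_W^*$ is a $*$-derivation'' should be shown equivalent to ``$W$ is a left separating point of $\mathcal{M}$.'' I would treat the two implications separately, expecting the forward implication to be a short consequence of Theorem \ref{thD1*} and the reverse one to require an explicit construction.

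For the direction in which $W$ is assumed to be a left separating point, I would start from Theorem \ref{thD1*}, which already yields that $\delta$ is a Jordan derivation and satisfies $\delta(WA)=\delta(W)A+W\delta(A^*)^*$ for every $A$. Invoking the standing hypothesis that every Jordan derivation from $\mathcal{A}$ into $\mathcal{M}$ is a derivation, I would upgrade this to $\delta(WA)=\delta(W)A+W\delta(A)$. Subtracting the two expressions for $\delta(WA)$ gives $W\bigl(\delta(A)-\delta(A^*)^*\bigr)=0$, and since $W$ is left separating this forces $\delta(A)=\delta(A^*)^*$ for all $A$; replacing $A$ by $A^*$ gives $\delta(A^*)=\delta(A)^*$, so $\delta$ is a $*$-derivation. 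This half is essentially bookkeeping once Theorem \ref{thD1*} is available.

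For the converse I would argue by contraposition: assuming $W$ is not a left separating point, I must exhibit a single linear mapping satisfying $\mathbb{D}_W^*$ that fails to be a $*$-derivation. There is then a nonzero $M_0\in\mathcal{M}$ with $WM_0=0$, and the candidate I would test is the left multiplication $\delta(A)=M_0^*A$. For any $A,B$ with $AB^*=W$ one finds $A\delta(B)^*+\delta(A)B^*=AB^*M_0+M_0^*AB^*=WM_0+M_0^*W=M_0^*W=\delta(W)$, the first summand vanishing precisely because $WM_0=0$; hence $\delta$ satisfies $\mathbb{D}_W^*$. On the other hand $\delta$ is not even a derivation, since $\delta(AB)-\delta(A)B-A\delta(B)=-AM_0^*B$, which is nonzero (take $A=B=I$) as $M_0^*\neq0$. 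Thus $\delta$ cannot be a $*$-derivation, which completes the contrapositive.

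The main obstacle is locating the right test mapping in the converse. The first things one tries---rank-one maps $A\mapsto\varphi(A)M_0$ or inner derivations $A\mapsto AN-NA$---do not cooperate, because the cross terms $AM_0^*$, $M_0B^*$, or the forced requirement that $N+N^*$ be central push one back into the $*$-derivation class. The key observation that unlocks the argument is to multiply on the correct side by the correct element: using $M_0^*$ (not $M_0$) \emph{on the left} makes exactly one of the two summands in $\mathbb{D}_W^*$ collapse via $WM_0=0$, while the surviving summand reproduces $\delta(W)$ automatically. Once this construction is identified the verification is immediate, and the failure of the derivation identity is transparent.
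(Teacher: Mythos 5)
Your proposal is correct and follows the paper's proof in all essentials: sufficiency via Theorem \ref{thD1*} combined with the hypothesis that Jordan derivations are derivations, and necessity via exactly the same counterexample $\delta(A)=M^*A$ for a nonzero $M$ with $WM=0$. Your route to $\delta(A^*)=\delta(A)^*$ is in fact slightly cleaner than the paper's: you subtract the identity $\delta(WA)=\delta(W)A+W\delta(A^*)^*$ from the derivation identity $\delta(WA)=\delta(W)A+W\delta(A)$ and cancel the left separating point $W$, whereas the paper re-runs the invertible-element argument with $T=nI+A$, deriving $\delta(T)T^{-1}+T\delta((T^{-1})^*)^*=0$ and comparing it with $\delta(I)=\delta(T)T^{-1}+T\delta(T^{-1})$ to reach the same conclusion.
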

\begin{proof}
First we prove sufficiency for the theorem.

Let $\delta$ be a linear mapping from $\mathcal{A}$ into $\mathcal{M}$ satisfying $\mathbb{D}_W^*$ and $W$ be a left separating point of $\mathcal{M}$. From Theorem \ref{thD1*}, $\delta$ is a Jordan derivation and satisfies $\delta(WA)=\delta(W)A+ W\delta(A^*)^*$ for each $A$ in $\mathcal{A}$.
By assumption, $\delta$ is a derivation.

Next, we prove $\delta(A^*)=\delta(A)^*$ for every $A$ in $\mathcal{A}$. Let $A\in\mathcal{A}$, $n \in \mathbb{N}$ with $n\geq \|A\|+2$, and $T=nI+A$. Then $T$ is invertible in $\mathcal{A}$.
By $WTT^{-1}=W$, we have
\begin{align*}
\delta(W)=\delta(WT)T^{-1}+WT{\delta({T^{-1}}^*)}^*=\delta(W)+ W\delta(T)T^{-1}+WT{\delta({T^{-1}}^*)}^*.
\end{align*}
Hence $W[\delta(T)T^{-1}+T{\delta({T^{-1}}^*)}^*]=0$, i.e.,$$\delta(T)T^{-1}+T{\delta({T^{-1}}^*)}^*=0.$$
Since $\delta$ is a derivation, we have
$$\delta(I)=\delta(T)T^{-1}+T\delta(T^{-1}).$$
Comparing above two equalities with $\delta(I)=0$,
\begin{align*}
T{\delta({T^{-1}}^*)}^*=T\delta(T^{-1}),
\end{align*}
i.e., $\delta(T^*)={\delta(T)}^*$.
Since $T=nI+A$, $\delta(A^*)={\delta(A)}^*$ for each $A$ in $\mathcal{A}$.
Thus, $\delta$ is a $*$-derivation.

Finally, we consider the necessity. If $W$ is not a left separating point of $\mathcal{M}$, there exists a non-zero element $M$ in $\mathcal{M}$ such that $WM=0$.

Define a linear mapping $\delta$ from $\mathcal{A}$ to $\mathcal{M}$ as
$$\delta(A)=M^*A$$
for each $A$ in $\mathcal{A}$. Then $\delta$ is a linear mapping and satisfies
$$A,B\in\mathcal{A},AB^*=W \Rightarrow A{\delta(B)}^*+\delta(A)B^*=\delta(W).$$
However, $\delta$ is not a derivation, which leads to a contradiction.
\end{proof}

By Theorem \ref{thD2*}, we know that if every Jordan derivation from $\mathcal{A}$ into $\mathcal{M}$ is a derivation, then every linear mapping from $\mathcal{A}$ into $\mathcal{M}$ satisfying $\mathbb{D}_W^*$ in which $W$ is left separating point of $\mathcal{M}$ is a $*$-derivation.
The problem when a Jordan derivation is a derivation has caught the attention of mathematicians.
Bre\v{s}ar \cite{B1} showed that every Jordan derivation on $2$-torsion free semiprime rings is a derivation.
Johnson showed that every continuous Jordan derivation from a $C$*-algebra into its Banach bimodule is a derivation in \cite[Theorem 6.3]{JS}.
Peralta and Russo \cite[Corollary 17]{PR} proved that every Jordan derivation from a $C$*-algebra into its Banach bimodule is continuous.
Then Theorem \ref{thD2*} generalizes the conclusion in \cite[Theorem 1]{AHL}.
Moreover, Ghahramani \cite{G3} asserted that any additive Jordan derivation from a full matrix ring over a unital ring into its $2$-torsion free bimodule (not necessarily unital) is an additive derivation.

Finally, we consider to substitute right separating points for left separating points in Theorem \ref{thD1*}.

\begin{corollary}
Suppose that $\mathcal{A}$ is a unital Banach $*$-algebra, $\mathcal{M}$ is a unital $*$-$\mathcal{A}$-bimodule, and $W$ is a right separating point of $\mathcal{M}$.
If $\delta$ is a linear mapping from $\mathcal{A}$ into $\mathcal{M}$ satisfying  $\mathbb{D}_W^*$ and $\delta(I)=0$, then $\delta$ is a Jordan derivation.
\end{corollary}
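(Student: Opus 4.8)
The plan is to deduce this corollary from Theorem \ref{thD1*} by transporting everything through the involution, the key observation being that $W^*$ is a left separating point exactly when $W$ is a right separating point. Indeed, if $W^*M=0$ for some $M\in\mathcal{M}$, then applying the involution of the bimodule and the rule $(am)^*=m^*a^*$ gives $M^*W=(W^*M)^*=0$, so the right separating property of $W$ forces $M^*=0$ and hence $M=0$. Thus $W^*$ is a left separating point of $\mathcal{M}$, and my goal becomes showing that $\delta$ satisfies $\mathbb{D}_{W^*}^*$.

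First I would record what the hypothesis $\delta(I)=0$ provides. From $IW=W$, the pair $A=I$, $B=W^*$ satisfies $AB^*=W$, so $\mathbb{D}_W^*$ yields $\delta(W)=\delta(W^*)^*+\delta(I)W$; with $\delta(I)=0$ this collapses to $\delta(W^*)=\delta(W)^*$. This is precisely the place where the extra assumption is essential: in the left separating setting of Theorem \ref{thD1*} both $\delta(I)=0$ and $\delta(W^*)=\delta(W)^*$ were extracted from the separating property applied to $IW=WI=W$, but under only right separation the relation $WI=W$ merely gives $W\delta(I)^*=0$, a \emph{left} multiplication that right separation cannot cancel. Hence $\delta(I)=0$ must be imposed, and once imposed it simultaneously delivers $\delta(W^*)=\delta(W)^*$.

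The core step is then to verify $\mathbb{D}_{W^*}^*$. Take $A,B\in\mathcal{A}$ with $AB^*=W^*$; applying the involution gives $BA^*=W$, so $\mathbb{D}_W^*$ read with the roles of $B$ and $A$ gives $\delta(W)=B\delta(A)^*+\delta(B)A^*$. Taking the involution in $\mathcal{M}$ and using $(am)^*=m^*a^*$ and $(ma)^*=a^*m^*$ turns this into $\delta(W)^*=\delta(A)B^*+A\delta(B)^*$. Combining with $\delta(W^*)=\delta(W)^*$ from the previous step yields exactly $\delta(W^*)=A\delta(B)^*+\delta(A)B^*$, which is $\mathbb{D}_{W^*}^*$. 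Since $W^*$ is a left separating point, Theorem \ref{thD1*} applied with $W^*$ in place of $W$ concludes that $\delta$ is a Jordan derivation.

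As for the main obstacle: there is essentially no computational difficulty, the argument being a formal involution duality. The only point requiring genuine care is bookkeeping the conjugate-linear involutions on $\mathcal{A}$ and on $\mathcal{M}$ consistently when transporting the defining identity of $\mathbb{D}_W^*$, and recognizing \emph{conceptually} that the hypothesis $\delta(I)=0$ is exactly the ingredient compensating for the second unit relation that is no longer exploitable once one works with a right rather than a left separating point.
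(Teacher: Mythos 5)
Your proposal is correct and follows essentially the same route as the paper: derive $\delta(W^*)=\delta(W)^*$ from $IW=W$ together with $\delta(I)=0$, transport the hypothesis $\mathbb{D}_W^*$ through the involution to obtain $\mathbb{D}_{W^*}^*$, and invoke Theorem \ref{thD1*} with $W^*$ in place of $W$. Your explicit check that $W^*$ is a left separating point of $\mathcal{M}$ is a detail the paper leaves implicit, but it is the same argument.
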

\begin{proof}
From $IW=W$, we have $\delta(W)=\delta(I)W+{\delta(W^*)}^*$.
Since $\delta(I)=0$, it follows that $\delta(W^*)={\delta(W)}^*$.
By assumption,
$$AB^*=W^* \Longrightarrow BA^*=W \Longrightarrow \delta(W)=B\delta(A)^*+\delta(B)A^*$$
$$ \Longrightarrow \delta(W)^*=\delta(A)B^*+A\delta(B)^*$$
Since $\delta(W^*)={\delta(W)}^*$,
$$AB^*=W^* \Longrightarrow \delta(W^*)=\delta(A)B^*+A\delta(B)^*$$
By Theorem \ref{thD1*}, $\delta$ is a Jordan derivation.
\end{proof}

\begin{corollary}
Let $\mathcal{A}$ be a unital Banach $*$-algebra and $\mathcal{M}$ be a unital $*$-$\mathcal{A}$-bimodule.
Suppose that every Jordan derivation from $\mathcal{A}$ into $\mathcal{M}$ is a derivation.
If $\delta$ is a linear mapping from $\mathcal{A}$ into $\mathcal{M}$ satisfying $\mathbb{D}_W^*$ and $\delta(I)=0$, then $\delta$ is a $*$-derivation if and only if $W$ is a right separating point of $\mathcal{M}$.
\end{corollary}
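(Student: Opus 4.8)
The plan is to transport the hypotheses to the adjoint point $W^{*}$ and then quote Theorem \ref{thD2*}, in precise analogy with the way the preceding corollary reduced the right separating version of Theorem \ref{thD1*} to its left separating version. Everything rests on two elementary facts. First, $W$ is a right separating point of $\mathcal{M}$ if and only if $W^{*}$ is a left separating point: from $W^{*}M=0$ one gets $M^{*}W=(W^{*}M)^{*}=0$, so right separation gives $M^{*}=0$ and hence $M=0$, and the converse is identical. Second, \emph{under the normalization $\delta(I)=0$}, the property $\mathbb{D}_{W}^{*}$ is equivalent to $\mathbb{D}_{W^{*}}^{*}$. Indeed, applying $\mathbb{D}_{W}^{*}$ to $I\,(W^{*})^{*}=W$ gives $\delta(W^{*})=\delta(W)^{*}$; then for any $A,B$ with $AB^{*}=W^{*}$ one has $BA^{*}=W$, so $\mathbb{D}_{W}^{*}$ applied to the pair $(B,A)$ yields $\delta(W)=B\delta(A)^{*}+\delta(B)A^{*}$, and taking adjoints together with $\delta(W^{*})=\delta(W)^{*}$ gives $\delta(W^{*})=\delta(A)B^{*}+A\delta(B)^{*}$, which is exactly $\mathbb{D}_{W^{*}}^{*}$. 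This is the same adjoint bookkeeping already performed in the preceding corollary.

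For the sufficiency direction I would reason as follows. Suppose $W$ is a right separating point. By the first fact $W^{*}$ is a left separating point, and by the second fact $\delta$ satisfies $\mathbb{D}_{W^{*}}^{*}$. The assumption that every Jordan derivation from $\mathcal{A}$ into $\mathcal{M}$ is a derivation is insensitive to replacing $W$ by $W^{*}$, so I may apply Theorem \ref{thD2*} verbatim with $W^{*}$ in the role of $W$ and conclude that $\delta$ is a $*$-derivation. No fresh computation beyond the two facts above is needed.

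For the necessity I would argue by contraposition: assuming $W$ is not a right separating point, I must produce a linear map $\delta$ with $\delta(I)=0$ that satisfies $\mathbb{D}_{W}^{*}$ yet is not a $*$-derivation. This is where the main obstacle lies. Failure of right separation only supplies a nonzero $M$ with $MW=0$ (equivalently $W^{*}M^{*}=0$), whereas the identity $\mathbb{D}_{W}^{*}$ is structurally matched to the \emph{left} separating situation: in $A\delta(B)^{*}+\delta(A)B^{*}$ the factorization $AB^{*}=W$ pairs cleanly with a relation $WN=0$, not with $MW=0$. Hence the one-sided multiplications $\delta(A)=N^{*}A$ that furnish the counterexamples in Theorem \ref{thD2*} are doubly unavailable here: they presuppose a left separation failure, and, decisively, they have $\delta(I)=N^{*}\neq0$, so the normalization $\delta(I)=0$ rules them out.

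The plan for the counterexample is therefore to build $\delta$ as a derivation vanishing at the unit, corrected by a lower-order term that breaks the $*$-symmetry while keeping $\mathbb{D}_{W}^{*}$ intact. In the model case $\mathcal{M}=\mathcal{A}$ this can be done explicitly: maps of the shape $\delta(A)=\tfrac12\bigl(WA-AW\bigr)+\omega(A)\,W$, with $\omega$ a linear functional satisfying $\omega(I)=0$ and $\omega(W)\neq0$, turn out to satisfy $\mathbb{D}_{W}^{*}$, to vanish at $I$, and to fail to be even a derivation; one verifies this directly, for instance in $M_{2}(\mathbb{C})$ with $W=E_{11}$ and $\omega(X)=\operatorname{tr}\!\bigl(X(MM^{*}-M^{*}M)\bigr)$. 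The hard part will be to carry out an analogous construction for a genuine bimodule $\mathcal{M}$, where the correcting term must be module-valued and tailored to the given $M$ with $MW=0$ so that the defining identity $\mathbb{D}_{W}^{*}$ still closes up, the value at $I$ is annihilated, and the $*$-symmetry is not accidentally restored; this verification is what I expect to absorb most of the effort.
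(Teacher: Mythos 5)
Your sufficiency argument coincides with the paper's own: under the normalization $\delta(I)=0$ one gets $\delta(W^{*})=\delta(W)^{*}$, the property $\mathbb{D}_{W}^{*}$ transfers to $\mathbb{D}_{W^{*}}^{*}$ by taking adjoints of the pair $(B,A)$, and since $W^{*}$ is a left separating point exactly when $W$ is a right separating point, Theorem \ref{thD2*} applied at $W^{*}$ gives that $\delta$ is a $*$-derivation. That half is complete and correct, and it is precisely how the paper argues (the paper sets $G=W^{*}$ and quotes Theorem \ref{thD2*}).

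The gap is in the necessity half. The paper disposes of it by simply transporting the full biconditional of Theorem \ref{thD2*} to $G=W^{*}$ and reading off ``$\delta$ is a $*$-derivation iff $G$ is a left separating point.'' You, quite reasonably, notice that the witness $\delta(A)=M^{*}A$ underlying the necessity part of Theorem \ref{thD2*} has $\delta(I)=M^{*}\neq 0$ and so cannot be reused here; but having raised the objection you never resolve it. The replacement construction is only announced as a plan (``the hard part will be\dots'', ``this is what I expect to absorb most of the effort''), so the necessity direction is not proved. Worse, the one concrete candidate you do write down fails: in $M_{2}(\mathbb{C})$ with $W=E_{11}$, $M=E_{12}$ and $\omega(X)=\operatorname{tr}\bigl(X(MM^{*}-M^{*}M)\bigr)=\operatorname{tr}\bigl(X(E_{11}-E_{22})\bigr)$, the map $\delta(A)=\tfrac{1}{2}(WA-AW)+\omega(A)W$ gives $\delta(E_{11})=E_{11}$, and testing $\mathbb{D}_{W}^{*}$ at $A=B=E_{11}$ (so that $AB^{*}=W$) yields $A\delta(B)^{*}+\delta(A)B^{*}=2E_{11}$, whereas $\delta(W)=E_{11}$; so your $\delta$ does not satisfy $\mathbb{D}_{W}^{*}$ at all. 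To match the paper you should either simply cite the ``only if'' clause of Theorem \ref{thD2*} for $\mathbb{D}_{W^{*}}^{*}$ as the paper does, or, if you maintain (defensibly) that this clause requires a counterexample respecting $\delta(I)=0$, you must actually exhibit one.
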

\begin{proof}
By Theorem \ref{thD2*}, $\delta$ is a $*$-derivation if and only if $W$ is a left separating point of $\mathcal{M}$.
By assumption, we have $AB^*=W^* \Longrightarrow \delta(W^*)=\delta(A)B^*+A\delta(B)^*$.
Let $G=W^*$, then the linear mapping $\delta$ satisfies $\mathbb{D}_G^*$.
Hence, $\delta$ is a $*$-derivation if and only if $G$ is a left separating point of $\mathcal{M}$.
It follows that $\delta$ is a $*$-derivation if and only if $W$ is a right separating point of $\mathcal{M}$.
\end{proof}

\begin{remark}\rm
Suppose that $\mathcal{A}$ is a unital Banach $*$-algebra and $\mathcal{M}$ is a unital $*$-$\mathcal{A}$-bimodule, then the following conditions are not equivalent:
\begin{align*}
AB^*=W &\Longrightarrow \delta(W)=\delta(A)B^*+A\delta(B)^*,\tag{D1}\label{D1} \\
A^*B=W &\Longrightarrow \delta(W)=\delta(A)^*B+A^*\delta(B).\tag{D2}\label{D2}
\end{align*}

If a linear mapping $\delta$ satisfies condition (\ref{D2}) and $W$ is a right separating point of $\mathcal{M}$, then by suitable modification to the proofs in Theorem \ref{thD1*}, we can obtain that $\delta$ is a Jordan derivation.

Suppose that every Jordan derivation from $\mathcal{A}$ into $\mathcal{M}$ is a derivation.
If $\delta$ is a linear mapping from $\mathcal{A}$ into $\mathcal{M}$ satisfying (\ref{D2}), then $\delta$ is a $*$-derivation if and only if $W$ is a right separating point of $\mathcal{M}$.
\end{remark}

\section{Property $\mathbb{L}_W^*$}\label{sec3}
In this section, we deal with the linear mappings satisfying $\mathbb{L}_W^*$ at left separating points and show that the continuity of the results in \cite[Theorem 3]{AHL} is not necessary.

Let us start with the lemma showed in \cite{DL}.

\begin{lemma}\label{lem2}\cite[Lemma 2.1]{DL}
Let $\mathcal{A}$ be a unital Banach algebra, $\mathcal{M}$ be a unital left $\mathcal{A}$-module, and $\delta: \mathcal{A}\rightarrow \mathcal{M}$ be a linear mapping. If for each invertible element
$A$ in $\mathcal{A}$, we have $A\delta(A^{-1})+A^{-1}\delta(A)=\delta(I)$, then $\delta$ is a Jordan left derivation.
\end{lemma}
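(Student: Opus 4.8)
The plan is to mirror the structure of the proof of Lemma \ref{lem1}, exploiting that the target identity $\delta(A^2)=2A\delta(A)$ is quadratic while the hypothesis only controls $\delta$ on inverses. First I would substitute $A=I$ into the hypothesis $A\delta(A^{-1})+A^{-1}\delta(A)=\delta(I)$, obtaining $2\delta(I)=\delta(I)$ and hence $\delta(I)=0$; the hypothesis then simplifies to $A\delta(A^{-1})+A^{-1}\delta(A)=0$, equivalently $\delta(A^{-1})=-A^{-2}\delta(A)$ for every invertible $A$. Because $\delta(I)=0$, for a fixed $A\in\mathcal{A}$ and a scalar $t$ we have $\delta(A+tI)=\delta(A)$, so applying the reformulated hypothesis to the invertible element $A+tI$ (legitimate whenever $|t|>\|A\|$) yields the clean relation $\delta((A+tI)^{-1})=-(A+tI)^{-2}\delta(A)$, in which the right-hand side depends on $t$ only through the resolvent.

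The engine of the argument is to play this relation off against the resolvent identity. Writing $R(t)=(A+tI)^{-1}$, I would use the exact finite identity $R(t)-R(s)=(s-t)R(t)R(s)$ together with the linearity of $\delta$ to express $(s-t)\delta(R(t)R(s))=\delta(R(t))-\delta(R(s))$, and then feed in the hypothesis in two different ways: once through $\delta(R(t))$ and $\delta(R(s))$ separately, and once through the product $R(t)R(s)=[(A+tI)(A+sI)]^{-1}$, whose denominator $(A+tI)(A+sI)=A^2+(s+t)A+stI$ is exactly what imports the desired term $\delta(A^2)$ via the hypothesis applied to this invertible product.

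Equating the two expressions and cancelling the common nonzero scalar $(s-t)$, every element in sight is a polynomial or a resolvent in the single element $A$, so all factors commute; multiplying through by $(A+tI)^2(A+sI)^2$ clears all inverses and collapses the right-hand side to $(2A+(s+t)I)\delta(A)$, while the left-hand side becomes $\delta(A^2)+(s+t)\delta(A)$. The terms proportional to $(s+t)\delta(A)$ then cancel, leaving precisely $\delta(A^2)=2A\delta(A)$. Since this holds for arbitrary $A$ (any two reals $t\ne s$ with $|t|,|s|>\|A\|$ will do), no separate passage from invertible to general elements is required, and $\delta$ is a Jordan left derivation.

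The step I expect to be the main obstacle is keeping the whole argument continuity-free: as $\delta$ is not assumed bounded, one cannot expand $(A+tI)^{-1}$ as a Neumann series and apply $\delta$ termwise, nor differentiate in $t$. The art is therefore to route everything through \emph{exact} finite algebraic identities evaluated at only finitely many invertible points, so that only linearity of $\delta$ is used; this is why the resolvent identity and the factorization of $(A+tI)(A+sI)$ do the heavy lifting rather than any analytic estimate. A secondary point to verify carefully is the bookkeeping of invertibility and the commutativity of all resolvent factors, since it is precisely that commutativity which makes the final cancellation go through; the choice $|t|,|s|>\|A\|$ with $t\ne s$ guarantees that $A+tI$, $A+sI$, and their product are invertible.
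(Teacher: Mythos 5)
Your argument is correct, and it is a genuine proof: after $A=I$ gives $\delta(I)=0$, the hypothesis becomes $\delta(B^{-1})=-B^{-2}\delta(B)$ for invertible $B$; applying this to $A+tI$, $A+sI$ and to their product $C=(A+tI)(A+sI)=A^2+(s+t)A+stI$, and feeding the resolvent identity $(A+tI)^{-1}-(A+sI)^{-1}=(s-t)C^{-1}$ through $\delta$, then left-multiplying by $C^2$, yields $(t-s)\bigl[2A+(s+t)I\bigr]\delta(A)=(t-s)\bigl[\delta(A^2)+(s+t)\delta(A)\bigr]$ and hence $\delta(A^2)=2A\delta(A)$. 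All steps use only exact identities, linearity of $\delta$, commutativity of rational functions of $A$, and left multiplication in the module, so the absence of continuity is no obstacle. Note, however, that the paper does not prove this lemma at all: it is quoted verbatim from \cite[Lemma 2.1]{DL}. The standard proof there (and in the analogous \cite[Lemma 2.1]{LZ} for Jordan derivations) runs through a single shift: one arranges that $B=nI+A$ and $I-B$ are both invertible and exploits the identity $B^{-1}+(I-B)^{-1}=\bigl(B-B^2\bigr)^{-1}$, applying the hypothesis to $B$, $I-B$ and $B-B^2$ to extract $\delta(B^2)=2B\delta(B)$. Your two-parameter resolvent version is an equally elementary variant of the same idea (both reduce the quadratic identity to the hypothesis evaluated at finitely many invertible points); yours has the minor advantage that the $(s+t)$-terms cancel automatically without tracking the cross terms coming from $B-B^2$, at the cost of carrying two scalars instead of one.
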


The proof of the following result is similar to the proof of Theorem \ref{thD1*}. Nevertheless, we include a proof for completeness.

\begin{theorem}\label{L1*} Let $\mathcal{A}$ be a unital Banach $*$-algebra, $\mathcal{M}$ be a unital $*$-$\mathcal{A}$-bimodule, and $W$ be a fixed point in $\mathcal{A}$. If $\delta$ is a linear mapping from $\mathcal{A}$ into $\mathcal{M}$ satisfying $\mathbb{L}_W^*$ and $\delta(I)=0$, then $W\delta$ is a Jordan left derivation.
\end{theorem}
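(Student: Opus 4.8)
The plan is to mirror the proof of Theorem~\ref{thD1*}, with one decisive change: since $W$ is now merely a fixed point and need not be a separating point, I cannot strip $W$ off the identities, so instead of producing the hypothesis of Lemma~\ref{lem1} for $\delta$ I will keep $W$ attached and apply Lemma~\ref{lem2} to the composite linear map $\phi := W\delta$, i.e.\ $\phi(A)=W\delta(A)$. First I record the elementary consequences. Applying $\mathbb{L}_W^*$ to the factorization $I\cdot W=W$ (taking $A=I$, $B=W^*$) and using $\delta(I)=0$ gives $\delta(W^*)=\delta(W)^*$; in particular $\phi(I)=W\delta(I)=0$, which is exactly the right-hand side demanded by Lemma~\ref{lem2}.

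Next I set up the two working identities just as in Theorem~\ref{thD1*}. For invertible $T$, factoring $W=(WT^{-1})\cdot T$ and invoking $\mathbb{L}_W^*$ (with $A=WT^{-1}$, $B=T^*$) gives
\[
\delta(W)=WT^{-1}\delta(T^*)^*+T\,\delta(WT^{-1}),
\]
the only structural change from (\ref{2.1})--(\ref{2.2}) being that the factor $T$ now multiplies $\delta(WT^{-1})$ \emph{on the left}, reflecting the left-derivation shape of $\mathbb{L}_W^*$. Solving this for $\delta(WT^{-1})$ and for $WT^{-1}\delta(T^*)^*$ yields the left analogues of (\ref{2.1}) and (\ref{2.2}). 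I then run the same telescoping computation with $B=nI+A$ and an integer $n\ge\|A\|+2$, so that $B$ and $I-B$ are both invertible (this is where the Banach structure is used), substituting the analogue of (\ref{2.1}) for the term $\delta\bigl(WB^{-1}(I-B)\bigr)$. As in Theorem~\ref{thD1*} the decisive simplification is the identity $(I-B)^{-1}B=(I-B)^{-1}-I$, so by linearity and $\delta(I)=0$ one has $\delta\bigl(((I-B)^{-1}B)^*\bigr)=\delta\bigl(((I-B)^{-1})^*\bigr)$; after the cancellations this collapses the whole computation to a single relation linking $\delta(B^*)^*$ with $\delta\bigl(((I-B)^{-1})^*\bigr)^*$, with $W$ retained throughout.

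The crux, and the place where the argument genuinely departs from Theorem~\ref{thD1*}, is the final assembly. In the separating case one cancels $W$ to reach the Lemma~\ref{lem1} identity for $\delta$ itself; here $W$ cannot be removed, so I instead combine the relation above with the complementary factorization $W=(WT)\cdot T^{-1}$ (again through $\mathbb{L}_W^*$) and keep $W$ in place, the goal being the identity
\[
T\,W\delta(T^{-1})+T^{-1}W\delta(T)=0\qquad(T\ \text{invertible}),
\]
which is precisely $T\phi(T^{-1})+T^{-1}\phi(T)=\phi(I)$. I expect the main obstacle to be the bookkeeping of the position of $W$: the left-derivation identities naturally leave $W$ sandwiched between the outer factors $T^{\pm1}$ and the inner $\delta(T^{\mp1})$ rather than sitting on the far left, and one must check that every substitution preserves this sandwiched placement so that the terminal identity is \emph{exactly} the hypothesis of Lemma~\ref{lem2} for $\phi=W\delta$ (and not, say, $W\bigl[T^{-1}\delta(T)+T\delta(T^{-1})\bigr]=0$, which is a different statement). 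Once the displayed identity is secured, Lemma~\ref{lem2} applies directly to $\phi$ and shows that $W\delta$ is a Jordan left derivation.
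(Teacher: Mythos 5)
Your overall strategy --- establish $T(W\delta)(T^{-1})+T^{-1}(W\delta)(T)=(W\delta)(I)=0$ for every invertible $T$ and then invoke Lemma \ref{lem2} for the composite map $W\delta$ --- is exactly the paper's, and your preliminary steps ($\delta(W^*)^*=\delta(W)$, the choice $B=nI+A$) are fine. The gap is in the factorization you feed into $\mathbb{L}_W^*$. Writing $W=(WT^{-1})\cdot T$ with $A=WT^{-1}$, $B=T^*$ gives
\[
\delta(W)=WT^{-1}\delta(T^*)^*+T\,\delta(WT^{-1}),
\]
and every identity you can telescope out of this has one of two shapes, $X\,\delta(WY)$ or $WX\,\delta(Y^*)^*$: in the first, $W$ is trapped inside the argument of $\delta$; in the second, $\delta$ is applied to a starred element and then starred again, and $W$ is separated from $\delta$ by the factor $X$. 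Neither can be converted into the required sandwiched terms $T^{\mp1}W\delta(T^{\pm1})$ without assuming that $\delta$ intertwines $*$ or that $W$ commutes past $T$ --- neither of which is a hypothesis (your complementary factorization $W=(WT)\cdot T^{-1}$ produces $WT\delta((T^{-1})^*)^*$, not $TW\delta(T^{-1})$). Indeed, if you carry the telescoping through you reach $WB^{-1}\delta(B^*)^*=(I-B)\,WB^{-1}(I-B)\,\delta(((I-B)^{-1})^*)^*$, and substituting your own identity back into the right-hand side simply returns $WB^{-1}\delta(B^*)^*=\delta(W)-B\delta(WB^{-1})$: the computation is circular and never produces an expression of the form $XW\delta(Y)$.

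The fix is to mirror the factorization as well as the module structure. The paper writes $W=T\cdot(T^{-1}W)$, i.e.\ takes $A=T$ and $B=(T^{-1}W)^*=W^*(T^*)^{-1}$ in $\mathbb{L}_W^*$, so that the term $B^*\delta(A)=T^{-1}W\delta(T)$ comes out with $W$ already adjacent to $\delta(T)$ and with $T$ un-starred --- the only way $\mathbb{L}_W^*$ can yield a term of the shape demanded by Lemma \ref{lem2} for $W\delta$. The awkward starred piece is then confined to $T\,\delta((T^{-1}W)^*)^*$, and the telescoping argument is used to prove the intermediate identity $\delta((AW)^*)^*=W\delta(A)+A\delta(W)$ for all $A$, which de-stars that piece in the final assembly and gives $T^{-1}W\delta(T)+TW\delta(T^{-1})=0$ as required.
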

\begin{proof}
It follows from $WI=W$ that $\delta(W)=W\delta(I)+\delta(W^*)^*$.
Then
$$\delta(W^*)^*=\delta(W).$$
For each invertible element $T$ in $\mathcal{A}$, $\delta(W)=\delta(TT^{-1}W)=T^{-1}W\delta(T)+T{(\delta({T^{-1}}W)^*)}^*$, it follows that
\begin{align}
{(\delta{({T^{-1}}W)^*})}^*={T^{-1}}\delta(W)-{T^{-2}}W\delta(T), \label{3.1}\\
{T^{-1}}W\delta(T)=\delta(W)-T{(\delta({T^{-1}W)^*})}^*.\label{3.2}
\end{align}
Let $A\in\mathcal{A}$, $n \in \mathbb{N}$ with $n\geq \|A\|+2$, and $B=nI+A$. Then $B$ and $I-B$ are both invertible in $\mathcal{A}$. By (\ref{3.1}) and (\ref{3.2}),
\begin{align*}
{B^{-1}}W\delta(B)&=\delta(W)-B{(\delta(B^{-1}W)^*)}^*\\
&=(I-B)\delta(W)-B[{B^{-1}}(I-B)\delta(W)\\
&\quad -({B^{-1}}(I-B))^{2}W\delta((I-B)^{-1}B)]\\
&=(I-B){B^{-1}}(I-B)W\delta((I-B)^{-1}B)\\
&=(I-B){B^{-1}}[\delta(W)-(I-B)^{-1}{(\delta((I-B)W)^*)}^*]\\
&=-\delta(W)+{B^{-1}}{(\delta(BW)^*)}^*.
\end{align*}
Then we have
\begin{align*}
{(\delta(AW)^*)}^*=W{\delta(A)}+A\delta(W)
\end{align*}
for each $A$ in $\mathcal{A}$.

For each invertible element $T$ in $\mathcal{A}$,
\begin{align*}
\delta(W)&=\delta(TT^{-1}W)=T^{-1}W\delta(T)+T{(\delta(T^{-1}W)^*)}^*\\
&=T^{-1}W\delta(T)+T(W\delta(T^{-1})+T^{-1}\delta(W))\\
&=T^{-1}W\delta(T)+TW\delta(T^{-1})+\delta(W).
\end{align*}
That is, $T^{-1}W\delta(T)+TW\delta(T^{-1})=0=\delta(I)$ for each invertible element $T$ in $\mathcal{A}$. By Lemma \ref{lem2}, $W\delta$ is a Jordan left derivation.
\end{proof}

The following corollary is now evident from what we have proved.
\begin{corollary} Let $\mathcal{A}$ be a unital Banach $*$-algebra and $\mathcal{M}$ be a unital $*$-$\mathcal{A}$-bimodule. If $\delta$ is a linear mapping from $\mathcal{A}$ into $\mathcal{M}$ satisfying  $\delta(I)=A\delta(B)^*+B^*\delta(A)$ for all $A,B$ in $\mathcal{A}$ with $AB^*=I$, then $\delta$ is a Jordan left derivation.
\end{corollary}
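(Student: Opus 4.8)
The plan is to recognize this corollary as the special case $W = I$ of Theorem \ref{L1*}. The hypothesis, namely that $AB^* = I$ implies $\delta(I) = A\delta(B)^* + B^*\delta(A)$, is precisely the property $\mathbb{L}_W^*$ with $W = I$. Theorem \ref{L1*} then tells us that $W\delta$ is a Jordan left derivation under the additional assumption $\delta(I) = 0$; since $W = I$ gives $W\delta = \delta$, the entire task reduces to verifying this single extra hypothesis.

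To establish $\delta(I) = 0$, I would exploit that the identity of a unital $*$-algebra is self-adjoint, so that $I I^* = I$. Taking $A = B = I$ in the defining relation is therefore legitimate, and it yields
\begin{align*}
\delta(I) = I\,\delta(I)^* + I^*\,\delta(I) = \delta(I)^* + \delta(I).
\end{align*}
Cancelling $\delta(I)$ from both sides forces $\delta(I)^* = 0$, and applying the involution once more gives $\delta(I) = 0$.

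With $\delta(I) = 0$ secured, Theorem \ref{L1*} applies at $W = I$ and returns that $W\delta = \delta$ is a Jordan left derivation, which is exactly the assertion. I do not anticipate any genuine obstacle here: the argument is a short reduction, and the only step that needs a moment's care is deducing $\delta(I) = 0$ from the hypothesis—precisely the ingredient that licenses the invocation of Theorem \ref{L1*}. The self-adjointness of $I$ is what makes the substitution $A = B = I$ admissible, and everything else is immediate.
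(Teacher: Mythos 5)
Your proposal is correct and follows the route the paper intends: the corollary is the case $W=I$ of Theorem \ref{L1*}, and the only point needing verification is $\delta(I)=0$, which you obtain exactly as one should by substituting $A=B=I$ (legitimate since $I^*=I$, so $II^*=I$) to get $\delta(I)=\delta(I)^*+\delta(I)$, hence $\delta(I)^*=0$ and thus $\delta(I)=0$. The paper leaves this as "evident from what we have proved," and your write-up supplies precisely the intended reduction.
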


Let us mention two important consequences of Theorem \ref{L1*}. We consider the condition that $W$ is a left separating point.
In \cite{ADL}, authors proved that every Jordan left derivation from $C^*$-algebra into its Banach left module is zero.
The following corollary extends the result in \cite[Theorem 3]{AHL}.

\begin{corollary} Let $\mathcal{A}$ be a unital $C^*$-algebra, $\mathcal{M}$ be a unital Banach $*$-$\mathcal{A}$-bimodule, and $W$ be a left separating point of $\mathcal{M}$. If $\delta$ is a linear mapping from $\mathcal{A}$ into $\mathcal{M}$ satisfying $\mathbb{L}_W^*$, then $\delta$ is identical with zero.
\end{corollary}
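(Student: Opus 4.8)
The plan is to combine Theorem~\ref{L1*} with the cited vanishing result for Jordan left derivations on $C^*$-algebras. The statement to prove is that if $\mathcal{A}$ is a unital $C^*$-algebra, $\mathcal{M}$ a unital Banach $*$-$\mathcal{A}$-bimodule, $W$ a left separating point of $\mathcal{M}$, and $\delta$ satisfies $\mathbb{L}_W^*$, then $\delta\equiv 0$. The first step is to secure the normalization $\delta(I)=0$, which is needed before Theorem~\ref{L1*} can be applied. This should follow from the defining relation $\mathbb{L}_W^*$: taking $A=W$ and $B=I$ (so that $AB^*=WI^*=WI=W$) gives $\delta(W)=W\delta(I)^*+I^*\delta(W)=W\delta(I)^*+\delta(W)$, hence $W\delta(I)^*=0$; since $W$ is a left separating point of $\mathcal{M}$, we conclude $\delta(I)^*=0$ and therefore $\delta(I)=0$.

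With $\delta(I)=0$ in hand, Theorem~\ref{L1*} applies directly and yields that $W\delta$ is a Jordan left derivation from $\mathcal{A}$ into $\mathcal{M}$. Here one should note that $\mathcal{M}$, being a $*$-$\mathcal{A}$-bimodule, is in particular a left $\mathcal{A}$-module, so the cited theorem of \cite{ADL}—that every Jordan left derivation from a $C^*$-algebra into its Banach left module is zero—is applicable to $W\delta$. Thus $W\delta(A)=0$ for every $A\in\mathcal{A}$.

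The final step is to pass from $W\delta=0$ to $\delta=0$ using the separating hypothesis. For each $A\in\mathcal{A}$ we have $W\delta(A)=0$, and since $W$ is a left separating point of $\mathcal{M}$ (i.e.\ $WM=0$ forces $M=0$), it follows immediately that $\delta(A)=0$. As this holds for all $A$, we obtain $\delta\equiv 0$, completing the proof.

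I do not anticipate a genuine obstacle here, as the corollary is essentially an assembly of three already-established facts. The only point requiring care is the very first one: establishing $\delta(I)=0$ so that Theorem~\ref{L1*} can be invoked, since $\mathbb{L}_W^*$ is stated only for pairs with $AB^*=W$ rather than giving $\delta(I)$ outright. The substitution $A=W$, $B=I$ exploiting that $W$ is a left separating point is the cleanest route; one must simply verify that this instance of $\mathbb{L}_W^*$ is legitimate (it is, since $WI^*=W$) and then cancel the common $\delta(W)$ term before applying the separating property.
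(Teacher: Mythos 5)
Your proposal is correct and follows essentially the same route as the paper: apply Theorem~\ref{L1*} to get that $W\delta$ is a Jordan left derivation, invoke the result of \cite{ADL} to conclude $W\delta=0$, and use that $W$ is a left separating point to get $\delta=0$. Your explicit verification that $\delta(I)=0$ (via $A=W$, $B=I$ and the separating property) is a detail the paper's proof leaves implicit, but it is exactly the normalization used elsewhere in that section, so the two arguments coincide.
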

\begin{proof}By assumption, $W\delta$ is a Jordan left derivation.
Hence, $W\delta$ is identical with zero. i.e., $W\delta(A)=0$ for each $A$ in $\mathcal{A}$. Since $W$ is a left separating point of $\mathcal{M}$, $$\delta(A)=0$$ for each $A$ in $\mathcal{A}$. So $\delta$ is identical with zero.
\end{proof}

In \cite{V}, Vukman showed that every Jordan left derivation on a complex semisimple Banach algebra is zero. By Theorem \ref{L1*}, we have the following results.
\begin{corollary}
Let $\mathcal{A}$ be a complex unital semisimple Banach $*$-algebra and $W$ be a left separating point of $\mathcal{A}$. If $\delta$ is a linear mapping from $\mathcal{A}$ into itself satisfying $\mathbb{L}_W^*$, then $\delta=0$.
\end{corollary}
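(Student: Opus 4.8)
The plan is to deduce this from Theorem~\ref{L1*} together with Vukman's vanishing theorem for Jordan left derivations, taking $\mathcal{M}=\mathcal{A}$ viewed as a unital $*$-$\mathcal{A}$-bimodule over itself, so that $W\delta$ is again a linear self-map of $\mathcal{A}$.

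First I would verify that the auxiliary hypothesis $\delta(I)=0$ required by Theorem~\ref{L1*} holds automatically in the present situation. Feeding the factorization $A=W$, $B=I$ (which satisfies $AB^*=WI^*=W$) into $\mathbb{L}_W^*$ gives $\delta(W)=W\delta(I)^*+\delta(W)$, hence $W\delta(I)^*=0$; since $W$ is a left separating point of $\mathcal{A}$, this forces $\delta(I)^*=0$ and therefore $\delta(I)=0$. This short computation is the only genuinely new step in the argument.

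Once $\delta(I)=0$ is secured, Theorem~\ref{L1*} applies and shows that $W\delta\colon\mathcal{A}\to\mathcal{A}$ is a Jordan left derivation. Because $\mathcal{A}$ is a complex unital semisimple Banach algebra, I would then invoke Vukman's theorem \cite{V}, which asserts that every Jordan left derivation on such an algebra vanishes identically; consequently $W\delta=0$, that is, $W\delta(A)=0$ for every $A\in\mathcal{A}$. Using the left separating property of $W$ one last time yields $\delta(A)=0$ for all $A$, i.e. $\delta=0$.

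I do not anticipate a serious obstacle, since the corollary is the semisimple-Banach-algebra analogue of the preceding $C^*$-algebra corollary, with the $C^*$ vanishing result replaced by Vukman's theorem. The only points that demand care are the verification of $\delta(I)=0$, so that Theorem~\ref{L1*} is genuinely applicable, and the observation that $W\delta$ is a self-map of $\mathcal{A}$, which is precisely what permits Vukman's theorem to be applied to it directly.
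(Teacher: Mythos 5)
Your argument is correct and matches the paper's intended route: derive $\delta(I)=0$ from $\mathbb{L}_W^*$ and the left separating property (exactly the computation used in Theorem~\ref{L2*}), apply Theorem~\ref{L1*} to conclude that $W\delta$ is a Jordan left derivation of $\mathcal{A}$ into itself, invoke Vukman's theorem to get $W\delta=0$, and cancel $W$ on the left. The paper leaves these steps implicit, but your write-up supplies them faithfully, including the necessary check that $\delta(I)=0$ so that Theorem~\ref{L1*} is applicable.
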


Next, we prove that if $W$ is a left separating point and $WA=AW$ for all $A\in\mathcal{A}$ in the Theorem \ref{L1*}, then $\delta$ is a Jordan left derivation.

\begin{theorem}\label{L2*}
Let $\mathcal{A}$ be a unital Banach $*$-algebra, $\mathcal{M}$ be a unital $*$-$\mathcal{A}$-bimodule, and $W$ be a left separating point of $\mathcal{M}$. If $\delta$ is a linear mapping from $\mathcal{A}$ into $\mathcal{M}$ satisfying $\mathbb{L}_W^*$ and $WA=AW$ for all $A$ in $\mathcal{A}$, then $\delta$ is a Jordan left derivation.
\end{theorem}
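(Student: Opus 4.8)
The plan is to leverage Theorem \ref{L1*}, which already gives that $W\delta$ is a Jordan left derivation once $\delta(I)=0$ is known; the two extra hypotheses here (that $W$ is a left separating point and that $W$ is central) should then allow me to cancel $W$ and transfer the Jordan left derivation identity from $W\delta$ to $\delta$ itself.

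First I would establish $\delta(I)=0$, which is not assumed in the statement but is needed to invoke Theorem \ref{L1*}. Applying $\mathbb{L}_W^*$ to the pair $A=W$, $B=I$, which satisfies $AB^*=WI^*=W$, yields
$$\delta(W)=W\delta(I)^*+I^*\delta(W)=W\delta(I)^*+\delta(W),$$
so that $W\delta(I)^*=0$. Since $W$ is a left separating point of $\mathcal{M}$, this forces $\delta(I)^*=0$, and applying the involution gives $\delta(I)=0$. With this in hand Theorem \ref{L1*} applies directly and tells us that $W\delta$ is a Jordan left derivation, that is,
$$W\delta(A^2)=2A\,W\delta(A)$$
for every $A\in\mathcal{A}$.

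Next I would remove the factor $W$. Using the centrality hypothesis $AW=WA$, the right-hand side rewrites as $2A\,W\delta(A)=2WA\,\delta(A)=W\bigl(2A\delta(A)\bigr)$, so the previous identity becomes
$$W\bigl(\delta(A^2)-2A\delta(A)\bigr)=0$$
for all $A\in\mathcal{A}$. Since $\delta(A^2)-2A\delta(A)\in\mathcal{M}$ and $W$ is a left separating point of $\mathcal{M}$, we conclude $\delta(A^2)=2A\delta(A)$ for each $A$, which is precisely the statement that $\delta$ is a Jordan left derivation.

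The argument is short because the substantive work is already packaged into Theorem \ref{L1*}; the genuinely new ingredients are only the two applications of the separating property — once to deduce $\delta(I)=0$ and once to cancel the final $W$ — together with the commutation $WA=AW$, which is exactly what is required to pull the left factor $W$ through the bimodule action $A\cdot(-)$. I do not expect a real obstacle here: the one point demanding care is that in $W\delta(A)$ the element $W$ sits on the \emph{left} of the module element, so centrality must be applied to the algebra factor $A$ before invoking the left-separating property, and no continuity or additional structure on $\mathcal{A}$ or $\mathcal{M}$ is needed.
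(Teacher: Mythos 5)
Your proof is correct and follows essentially the same route as the paper's: derive $\delta(I)=0$ from the factorization $WI^*=W$ and the left-separating property, invoke Theorem \ref{L1*} to get that $W\delta$ is a Jordan left derivation, then use $WA=AW$ to pull $W$ to the front and cancel it with the left-separating property. No gaps.
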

\begin{proof}From $IW=W$, we have $\delta(W)=I\delta(W)+W{\delta(I^*)}^*$.
Since $W$ is a left separating point of $\mathcal{M}$, it follows that $\delta(I)=0$.
By Theorem \ref{L1*}, $W\delta$ is a Jordan left derivation.

Since $WA=AW$ for all $A$ in $\mathcal{A}$, we have $W\delta(A^{2})=2AW\delta(A)=2WA\delta(A)$.
It means that $\delta(A^{2})=2A\delta(A)$ for all $A$ in $\mathcal{A}$.
\end{proof}

\begin{remark}\rm
Suppose that $\mathcal{A}$ is a unital Banach $*$-algebra, $\mathcal{M}$ is a unital $*$-$\mathcal{A}$-bimodule, and $\delta$ is a linear mapping from $\mathcal{A}$ into $\mathcal{M}$ satisfying $\delta(I)=0$, the following conditions are not equivalent:
\begin{align*}
AB^*=W &\Longrightarrow \delta(W)=A{\delta(B)}^*+B^*\delta(A),\tag{L1}\label{L1} \\
A^*B=W &\Longrightarrow \delta(W)=A^*\delta(B)+B{\delta(A)}^*,\tag{L2}\label{L2}\\
AB^*=W &\Longrightarrow \delta(W)=\delta(A)B^*+{\delta(B)}^*A,\tag{R1}\label{R1}\\
A^*B=W &\Longrightarrow \delta(W)={\delta(A)}^*B+\delta(B)A^*.\tag{R2}\label{R2}
\end{align*}

If $\delta$ satisfies condition (\ref{L2}), by the similar proof of Theorem \ref{L1*}, we can prove that $TW{\delta({T^{-1}}^*)}^*+T^{-1}W{\delta(T^*)}^*=0$ for each invertible element $T$ in $\mathcal{A}$.

If $\delta$ satisfies condition (\ref{R1}),
$AB^*=W^* \Longrightarrow BA^*=W \Longrightarrow \delta(W)=\delta(B)A^*+{\delta(A)}^*B \Longrightarrow {\delta(W)}^*=A{\delta(B)}^*+B^*\delta(A)$.
Since $\delta(W)=\delta(I)W+{\delta(W^*)}^*I$, we obtain that $\delta(W^*)={\delta(W)}^*$. Then
$$AB^*=W^* \Longrightarrow \delta(W^*)=A{\delta(B)}^*+B^*\delta(A).$$
Hence $W^*\delta$ is a Jordan left derivation.
When $W$ is a right separating point of $\mathcal{M}$, we can obtain that $\delta$ is a Jordan left derivation.

If $\delta$ satisfies condition (\ref{R2}), by using above transformation, $A^*B=W^* \Longrightarrow \delta(W^*)=A^*\delta(B)+B{\delta(A)}^*$. Then $TW^*{\delta({T^{-1}}^*)}^*+T^{-1}W^*{\delta(T^*)}^*=0$ for each invertible element $T$ in $\mathcal{A}$.
\end{remark}

\begin{remark}\rm
Let $\mathcal{R}$ be a $2$-torsion free ring with the unity $I$ which satisfies that for each $T$ in $\mathcal{R}$, there is some integer $n$ such that $nI-T$ and $(n-1)I-T$ are invertible or $nI+T$ and $(n-1)I+T$ are invertible. If we replace $\mathcal{A}$ by $\mathcal{R}$ and replace linear mappings by additive mappings, then the above results are still true.
\end{remark}
\section{Property $\mathbb{T}_0^*$}

An algebra $\mathcal{A}$ is said to be \emph{zero product determined} if for every bilinear mapping $\phi$ from $\mathcal{A}\times\mathcal{A}$ into each  linear space $\mathcal{X}$
satisfying
\begin{align*}
\phi(A,B) = 0, ~~\text{whenever}~~A,B \in\mathcal{A},~~~~ AB = 0,
\end{align*}
there exists a  linear mapping $\omega$ from $\mathcal{A}$  into $\mathcal{X}$  such that $\phi(A,B) = \omega(AB)$ for all $A,B \in\mathcal{A}$.
Bre\u{s}ar \cite[Section 5.3]{BB} gave some important examples which are zero product determined algebras.

We denote by $\mathfrak{F}(\mathcal{A})$ the subalgebra of $\mathcal{A}$ generated by all idempotents in $\mathcal{A}$. Let us recall a definition that is introduced by Ghahramani in \cite{G2}.
 Let $\mathcal{A}$ be an algebra. An $\mathcal{A}$-bimodule $\mathcal{M}$ is said to have the \emph{property $\mathbb{M}$}, if there is an ideal $\mathcal{J}\subseteq\mathfrak{F}(\mathcal{A})$ of $\mathcal{A}$ such that
$$\{ M\in \mathcal{M}:XMX=0 \quad \text{for every}\quad  X\in \mathcal{J} \} = \{0\}.$$
Moreover, if $\mathcal{A}=\mathfrak{F}(\mathcal{A})$, every $\mathcal{A}$-bimodule has the property $\mathbb{M}$ . %\cite[Proposition 2.I]{GJ}.
Note that if an $\mathcal{A}$-bimodule $\mathcal{M}$ has the property $\mathbb{M}$, then
$$\{ M\in \mathcal{M}: XM=MX=0 \quad \text{for every}\quad  X\in \mathcal{J} \} = \{0\}.$$
In the later case, $\mathcal{M}$ is said to have the \emph{property $\mathbb{P}$}.

We first consider two linear mappings satisfying $\mathbb{T}_0^*$ on  unital zero product determined $*$-algebras.

\begin{theorem}\label{0Z}
Let $\mathcal{A}$ be a unital zero product determined $*$-algebra and $\mathcal{M}$ be a unital $*$-$\mathcal{A}$-bimodule.
Suppose that $\delta $ and $\tau $ are linear mappings from $\mathcal{A}$ into $\mathcal{M}$, then $\delta $ and $\tau $ satisfy
$$A, B\in \mathcal{A},~ AB^*=0 \quad \Longrightarrow \quad \delta(A)B^*+A\tau(B)^*=0$$
if and only if there exist derivations $\Delta$ and $\Gamma$ from $\mathcal{A}$ into $\mathcal{M}$ such that $\delta(A)=\Delta(A)+\delta(I)A$, $\tau(A)=\Gamma(A)+ \tau(I) A$, and $\Delta(A^*) = \Gamma(A)^*$  for each $A$ in $\mathcal{A}$.
\end{theorem}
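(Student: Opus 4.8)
The plan is to prove both implications, treating the ``if'' direction as a direct computation and concentrating effort on the ``only if'' direction, where the zero product determined hypothesis does the real work. For sufficiency, suppose $\delta(A)=\Delta(A)+\delta(I)A$ and $\tau(A)=\Gamma(A)+\tau(I)A$ with $\Delta,\Gamma$ derivations satisfying $\Delta(A^*)=\Gamma(A)^*$. Given $A,B$ with $AB^*=0$, I would expand $\delta(A)B^*+A\tau(B)^*$ and observe that the two inner terms carry the factor $AB^*=0$, namely $\delta(I)AB^*=0$ and $AB^*\tau(I)^*=0$, leaving $\Delta(A)B^*+A\Gamma(B)^*$. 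Rewriting $\Gamma(B)^*=\Delta(B^*)$ turns this into $\Delta(A)B^*+A\Delta(B^*)=\Delta(AB^*)=\Delta(0)=0$, which is exactly the required vanishing. This step is routine.

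For necessity, the key idea is to untwist the involution. I would set $\sigma(C):=\tau(C^*)^*$, which is linear because the two applications of $*$ cancel, and rewrite $\mathbb{T}_0^*$ (after substituting $C=B^*$) as the ordinary zero-product relation $AC=0\Rightarrow\delta(A)C+A\sigma(C)=0$. Now the bilinear map $\phi(A,C):=\delta(A)C+A\sigma(C)$ vanishes whenever $AC=0$, so the zero product determined hypothesis supplies a linear map $\omega\colon\mathcal{A}\to\mathcal{M}$ with $\delta(A)C+A\sigma(C)=\omega(AC)$ for all $A,C$. Specializing $C=I$ and $A=I$ yields $\omega(A)=\delta(A)+A\sigma(I)$ and $\omega(A)=\delta(I)A+\sigma(A)$; equating these gives the link $\sigma(A)=\delta(A)+A\sigma(I)-\delta(I)A$.

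Substituting this link back into $\omega(AC)=\delta(AC)+AC\sigma(I)$ would collapse the main identity to $\delta(AC)=\delta(A)C+A\delta(C)-A\delta(I)C$, from which a short check shows $\Delta(A):=\delta(A)-\delta(I)A$ is a derivation. Finally I would define $\Gamma(A):=\tau(A)-\tau(I)A$ and, using $\sigma(I)=\tau(I)^*$ together with the link evaluated at $A^*$, verify the compatibility identity $\Delta(A^*)=\Gamma(A)^*$; that $\Gamma$ is itself a derivation then follows formally by applying $*$ to $\Delta((AB)^*)=\Delta(B^*A^*)$ and using that $\Delta$ is a derivation.

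The main obstacle I anticipate is not any single computation but the careful bookkeeping of the involution when passing between $\tau$, $\sigma$, and $\Gamma$: one must track the order-reversal $(XY)^*=Y^*X^*$ and the identities $\sigma(I)=\tau(I)^*$ and $\Gamma(A)^*=\tau(A)^*-A^*\tau(I)^*$ so that the final condition $\Delta(A^*)=\Gamma(A)^*$ comes out with the sides matched correctly. The conceptual crux, by contrast, is cheap: once the relation is untwisted into a genuine zero-product condition, the zero product determined property converts the local hypothesis into the global identity $\delta(A)C+A\sigma(C)=\omega(AC)$ in one stroke, and everything thereafter is bookkeeping.
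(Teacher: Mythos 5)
Your proposal is correct and follows essentially the same route as the paper: untwist the involution by replacing $\tau$ with $\widehat{\tau}(C)=\tau(C^*)^*$, apply the zero product determined hypothesis to the resulting bilinear map to obtain a global identity $\omega(AC)$, and specialize $A=I$ and $C=I$ to extract the derivations and the relation $\Delta(A^*)=\Gamma(A)^*$. The only (cosmetic) difference is that the paper normalizes to $\Delta=\delta-\delta(I)(\cdot)$ and $\Gamma=\tau-\tau(I)(\cdot)$ before invoking the zero-product property, whereas you normalize afterwards.
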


\begin{proof} The sufficiency is obvious, we only need to prove the necessity.
Define linear mappings $\Delta$ and $\Gamma$  from $\mathcal{A}$ into $\mathcal{M}$ by $$\Delta(A)=\delta(A)-\delta(I) A~~\text{ and }~~\Gamma(A)= \tau(A)-\tau(I) A$$ for each $A$ in $\mathcal{A}$.
It is obvious that $\Delta(I)=0=\Gamma(I)$ and
$$\Delta(A)B^*+A\Gamma(B)^*=0$$ for each $A,B \in\mathcal{A}$ with $AB^*=0$.
In the following, we show that
$\Delta$ and $\Gamma$ are derivations, and $\Delta(A)^* = \Gamma(A^*)$ for each $A$ in $\mathcal{A}$.

Define a bilinear mapping $\phi:\mathcal{A}\times \mathcal{A} \to \mathcal{M}$ by $\phi(A,B)=\Delta(A)B+A\Gamma(B^*)^*$ for each $A,B$ in $\mathcal{A}$. Thus $AB=A(B^*)^*=0$ implies $\phi(A,B)=0$. Since $\mathcal{A}$ is a zero product determined algebra, there exists a linear mapping $\omega$ from $\mathcal{A}$ into $\mathcal{M}$ such that
\begin{align}
\Delta(A)B+A\Gamma(B^*)^*=\omega(AB)\label{4.3}
\end{align}
for each $A,B$ in $\mathcal{A}$.
Now let $A=I$ in (\ref{4.3}), we obtain $\omega(B)=\Delta(I)B+\Gamma(B^*)^*$ for each $B $ in $\mathcal{A}$. It follows that $\omega(B)=\Gamma(B^*)^*$ for each $B$ in $\mathcal{A}$.
And let $B=I$ in (\ref{4.3}), $\omega(A)=\Delta(A)$ for each $A$ in $\mathcal{A}$.
Thus, $\Delta(A^*)=\omega(A^*)=\Gamma(A)^*$.
By equation (\ref{4.3}),
\begin{align*}
 \Delta(AB)&=\omega(AB)=\Delta(A)B+A\Gamma(B^*)^*\\
& = \Delta(A)B+A\omega(B)\\
&=\Delta(A)B+A\Delta(B)
\end{align*}
for each $A, B$ in $\mathcal{A}$, i.e., $\Delta$ is a derivation. One can easily verify that $\Gamma$  is also  a derivation in the same way.
\end{proof}

In the remainder of this section, we consider two linear mappings $\delta$ and $\tau$ from a unital $*$-algebra $\mathcal{A}$ into a unital $*$-$\mathcal{A}$-bimodule $\mathcal{M}$ with the property $\mathbb{P}$ satisfying ($\mathbb{T}_0^*$).

\begin{theorem}\label{0*}
Let $\mathcal{A}$ be a unital $\ast$-algebra and $\mathcal{M}$ be a unital $\ast$-$\mathcal{A}$-bimodule with the property $\mathbb{P}$. Suppose that $\delta $ and $\tau $ are linear mappings from $\mathcal{A}$ into $\mathcal{M}$, then  $\delta $ and $\tau $ satisfy
$$A,B\in \mathcal{A}, AB^*=0 \quad \Longrightarrow \quad \delta(A)B^*+A\tau(B)^*=0$$
if and only if there exist derivations $\Delta$ and $\Gamma$  from $\mathcal{A}$ into $\mathcal{M}$ such that $\delta(A)=\Delta(A)+\delta(I)A$, $\tau(A)=\Gamma(A)+\tau(I)A$ and $\Delta(A)^* = \Gamma(A^*)$ for each $A$ in $\mathcal{A}$.
\end{theorem}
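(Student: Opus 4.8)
The plan is to mirror the structure of the proof of Theorem~\ref{0Z}, replacing the role of the zero-product-determined hypothesis by the property $\mathbb{P}$ together with an idempotent analysis. The sufficiency is a routine verification: if $\Delta,\Gamma$ are derivations with $\Delta(A)^*=\Gamma(A^*)$, then for $AB^*=0$ one has $0=\Delta(AB^*)=\Delta(A)B^*+A\Delta(B^*)=\Delta(A)B^*+A\Gamma(B)^*$, while the contributions $\delta(I)AB^*$ and $AB^*\tau(I)^*$ vanish because $AB^*=0$. So I focus on necessity.

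First I would reduce to maps that annihilate $I$. Put $\Delta(A)=\delta(A)-\delta(I)A$ and $\Gamma(A)=\tau(A)-\tau(I)A$; then $\Delta(I)=\Gamma(I)=0$ and the hypothesis becomes $AB^*=0\Rightarrow\Delta(A)B^*+A\Gamma(B)^*=0$. The essential simplification is to remove the involution from the hypothesis by introducing the linear map $\widetilde{\Gamma}(A):=\Gamma(A^*)^*$ (it is linear, being a composite of $\Gamma$ with two conjugate-linear maps). Writing $C=B^*$ converts the hypothesis into the honest zero-product relation $AC=0 \Rightarrow \Delta(A)C+A\widetilde{\Gamma}(C)=0$, with $\widetilde{\Gamma}(I)=0$. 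Two observations make this the whole game: the desired identity $\Delta(A)^*=\Gamma(A^*)$ is exactly $\Delta=\widetilde{\Gamma}$, and the $*$-twist preserves the derivation property (since $\widetilde{\Gamma}(CD)=\Gamma(D^*C^*)^*=C\widetilde{\Gamma}(D)+\widetilde{\Gamma}(C)D$ when $\Gamma$ is a derivation), so $\widetilde{\Gamma}$ is a derivation if and only if $\Gamma$ is. Thus it suffices to prove that $\Delta=\widetilde{\Gamma}$ and that $\Delta$ is a derivation; this is precisely the non-involutive situation for modules with property $\mathbb{P}$ treated in \cite{LL}, which one may cite, but I would also carry out the idempotent argument directly.

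For the idempotent analysis, let $P\in\mathcal{A}$ be any idempotent. Feeding the relations $(I-P)P=0$ and $P(I-P)=0$ into $AC=0\Rightarrow\Delta(A)C+A\widetilde{\Gamma}(C)=0$ and using $\Delta(I)=\widetilde{\Gamma}(I)=0$ yields $\Delta(P)P=\widetilde{\Gamma}(P)-P\widetilde{\Gamma}(P)$ and $\Delta(P)-\Delta(P)P=P\widetilde{\Gamma}(P)$; adding these gives $\Delta(P)=\widetilde{\Gamma}(P)$, and back-substitution gives the local derivation identity $\Delta(P)=P\Delta(P)+\Delta(P)P$. More generally, for arbitrary $A$ and an idempotent $P$ the orthogonality relations $A(I-P)\cdot P=0$ and $P\cdot(I-P)A=0$ produce one-sided derivation identities for $\Delta$ after multiplication by $P$ and by $I-P$; these encode the derivation defect of $\Delta$ multiplied by idempotents.

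The final, and hardest, step is globalization. Let $\mathcal{J}\subseteq\mathfrak{F}(\mathcal{A})$ be the ideal provided by the property. Using the idempotent identities above together with the fact that $\mathcal{J}$ is an ideal generated by idempotents, I would show that the derivation defect $M:=\Delta(ST)-\Delta(S)T-S\Delta(T)$ and the difference $\Delta(A)-\widetilde{\Gamma}(A)$ are both annihilated on each side by every $X\in\mathcal{J}$, i.e. $XM=MX=0$ and similarly for the difference. Property $\mathbb{P}$ then forces $M=0$ and $\Delta=\widetilde{\Gamma}$, so $\Delta$ is a derivation and $\Delta(A)^*=\Gamma(A^*)$; the twist transfers the derivation property to $\Gamma$, and undoing the reduction recovers $\delta(A)=\Delta(A)+\delta(I)A$ and $\tau(A)=\Gamma(A)+\tau(I)A$. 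The main obstacle is exactly this propagation from idempotents to all of $\mathcal{A}$: one must control the mixed terms involving both $\Delta$ and $\widetilde{\Gamma}$ (before they are known to coincide) and verify the two-sided annihilation for products of idempotents with general elements. This is where property $\mathbb{P}$ is indispensable, playing here the role that ``zero product determined'' played in Theorem~\ref{0Z}.
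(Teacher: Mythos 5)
Your proposal is correct and follows essentially the same route as the paper: the paper likewise passes to $\widehat{\tau}(A)=\tau(A^*)^*$ (your $\widetilde{\Gamma}$, up to the $\delta(I)$ and $\tau(I)$ corrections), converts the hypothesis into the ordinary zero-product relation $AB=0\Rightarrow\delta(A)B+A\widehat{\tau}(B)=0$, invokes \cite[Theorem I]{LL} to get the derivation $\Delta$ with $\Delta(A)=\delta(A)-\delta(I)A=\widehat{\tau}(A)-A\widehat{\tau}(I)$, and then transfers the derivation property to $\Gamma$ through the involution exactly as you describe. Your supplementary ``direct'' idempotent argument is only a sketch (the globalization step is asserted rather than carried out), but since the citation route already closes the proof this does not affect correctness.
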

\begin{proof}
We only need to prove the necessity.
Define a linear mapping $\widehat{\tau}$ from $\mathcal{A}$ into $\mathcal{M}$ by $\widehat{\tau}(A)=\tau(A^*)^*$ for each $A\in \mathcal{A}$. By assumption
$$\delta(A)B+A\tau (B^*)^*=0$$
for each $A,B$ in $\mathcal{A}$ with $AB=0$, we have
$$\delta(A)B+ A\hat{\tau}(B)=0$$
for each $A,B$ in $\mathcal{A}$ with $AB=0$. From \cite[Theorem I]{LL}, there exists a derivation $\Delta$ from $\mathcal{A}$ into $\mathcal{M}$ such that $\Delta(A)=\delta(A)-\delta(I)A=\widehat{\tau}(A)-A\widehat{\tau}(I)$ for each $A$ in $ \mathcal{A}$.
Therefore, $$\tau(A^*)-\tau(I)A^*=(\delta(A)-\delta(I)A)^*$$ for each
$A$ in $\mathcal{A}$.
Denote the linear mapping $\Gamma$ by $\Gamma(A)=\tau(A)-\tau(I)A$ for each $A$ in $\mathcal{A}$. Then $\Gamma(A^*)=\Delta(A)^* $.

It remains to show that $\Gamma$ is a derivation. Since $\Delta$ is a derivation, we have \begin{align*}\Gamma(AB)&=\Delta((AB)^*)^*=\Delta(B^*A^*)^*\\
&=(\Delta(B^*)A^*+B^*\Delta(A^*))^*\\
&=A\Delta(B^*)^*+\Delta(A^*)^*B\\
&=A\Gamma(B)+\Gamma(A)B.
\end{align*}
for each $A$, $B$ in $\mathcal{A}$. This completes the proof.
\end{proof}

In \cite{FFG}, Fadaee, Fallahi and Ghahramani considered $*$-derivable mappings at zero on unital zero product determined $\ast$-algebras or standard operator algebras. However, as a corollary of Theorem \ref{0*},
we can immediately get the following result that is an extension of \cite[Theorem 3.2]{FFG}.

\begin{corollary}
 Let $\mathcal{A}$ be a unital $\ast$-algebra and $\mathcal{M}$ be a unital $\ast$-$\mathcal{A}$-bimodule with the property $\mathbb{P}$.
Suppose that $\delta $ is a linear mapping from $\mathcal{A}$ into $\mathcal{M}$, then $\delta $ satisfies
$$A,B\in \mathcal{A}, AB^*=0 \quad \Longrightarrow \quad \delta(A)B^*+A\delta(B)^*=0$$
if and only if there exists a $*$-derivation $\Delta$ from $\mathcal{A}$ into $\mathcal{M}$ such that $\delta(A)=\Delta(A)+\delta(I)A$ for every $A$  in $\mathcal{A}$.
\end{corollary}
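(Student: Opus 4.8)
The plan is to obtain this corollary as the diagonal case $\tau=\delta$ of Theorem~\ref{0*}. The sufficiency needs only a direct computation: assuming $\delta(A)=\Delta(A)+\delta(I)A$ with $\Delta$ a $*$-derivation, I would expand $\delta(A)B^*+A\delta(B)^*$, observe that the two terms carrying the factor $\delta(I)$ are left- or right-multiples of $AB^*$ and hence vanish when $AB^*=0$, and reduce the expression to $\Delta(A)B^*+A\Delta(B^*)=\Delta(AB^*)=0$, using $\Delta(B)^*=\Delta(B^*)$ together with the derivation identity. So the real content is the necessity.

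For the necessity I would simply apply Theorem~\ref{0*} with $\tau=\delta$. The hypothesis of the corollary is exactly the hypothesis of that theorem in this case, so it yields derivations $\Delta$ and $\Gamma$ from $\mathcal{A}$ into $\mathcal{M}$ satisfying
\[
\delta(A)=\Delta(A)+\delta(I)A,\qquad \delta(A)=\Gamma(A)+\tau(I)A,\qquad \Delta(A)^*=\Gamma(A^*)
\]
for every $A\in\mathcal{A}$. Because $\tau=\delta$ forces $\tau(I)=\delta(I)$, subtracting the first two identities gives $\Delta(A)=\Gamma(A)$ for all $A$, that is, $\Delta=\Gamma$.

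With $\Gamma=\Delta$ in hand, the compatibility relation $\Delta(A)^*=\Gamma(A^*)$ becomes $\Delta(A)^*=\Delta(A^*)$ for every $A$, which is precisely the assertion that the derivation $\Delta$ is a $*$-derivation. This $\Delta$ then furnishes the desired representation $\delta(A)=\Delta(A)+\delta(I)A$, completing the argument.

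I do not expect a genuine obstacle here, since the proof is a bookkeeping specialization of Theorem~\ref{0*}. The one point deserving care is verifying that the two representations of $\delta$ delivered by the theorem are forced to coincide, so that a single map $\Delta=\Gamma$ carries both the derivation property and the $*$-compatibility; once that is settled, the $*$-derivation property falls out at once from $\Delta(A)^*=\Gamma(A^*)$.
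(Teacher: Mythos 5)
Your proposal is correct and matches the paper's intended argument: the paper derives this corollary directly from Theorem \ref{0*} by taking $\tau=\delta$, exactly as you do, with the observation that $\tau(I)=\delta(I)$ forces $\Gamma=\Delta$ and hence $\Delta(A)^*=\Delta(A^*)$. Your explicit check of sufficiency is also sound.
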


\section{Property $\mathbb{J}_0^*$}

In this section, we characterize two linear mappings satisfying ($\mathbb{J}_0^*$) from a unital algebra into its unital bimodule with the property $\mathbb{M}$.

We start by recalling a result concerning Jordan derivable mappings at zero Jordan products  which can be found in \cite{G2}.
\begin{lemma}\cite{G2}\label{lem1}
 Let $\mathcal{A}$ be a unital algebra and$\mathcal{M}$ be a unital $\mathcal{A}$-bimodule with the property $\mathbb{M}$. Suppose that $\delta$ is a linear mapping from $\mathcal{A}$ into $\mathcal{M}$ satisfying $$A,B\in\mathcal{A}, A\circ B=0\quad \Longrightarrow \quad  \delta(A)\circ B+A\circ\delta(B)=0.$$
Then there exists a Jordan derivation $\Delta$ from $\mathcal{A}$ into $\mathcal{M}$ such that
$$\delta(A)=\Delta(A)+A\delta(I)$$
and $A\delta(I)=\delta(I)A$ for every $A$ in $\mathcal{A}$.
\end{lemma}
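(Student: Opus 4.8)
The plan is to put $D:=\delta(I)$, prove that $D$ is central for the module action (i.e. $AD=DA$ for all $A$), and then check that the corrected map $\Delta(A):=\delta(A)-AD$ is a Jordan derivation; since $\Delta(I)=D-D=0$, this gives exactly the decomposition claimed.

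First I would feed idempotents into the hypothesis. For an idempotent $P$ the elements $P$ and $I-P$ are orthogonal, so $P\circ(I-P)=0$; substituting $A=P$, $B=I-P$ and using $\delta(I-P)=D-\delta(P)$ produces
\[
2\delta(P)-2\delta(P)P-2P\delta(P)+PD+DP=0.
\]
Compressing this identity by the Peirce idempotents $P$ and $I-P$ on either side (the four corner cases) isolates $PD(I-P)=0$ and $(I-P)DP=0$, that is $PD=PDP=DP$. Thus $D$ commutes with every idempotent, hence with the subalgebra $\mathfrak{F}(\mathcal{A})$ generated by the idempotents, and in particular with the ideal $\mathcal{J}$ furnished by property $\mathbb{M}$.

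Next I would upgrade this to full centrality. Fix $A\in\mathcal{A}$ and $X\in\mathcal{J}$. Since $\mathcal{J}$ is an ideal, $XAX\in\mathcal{J}\subseteq\mathfrak{F}(\mathcal{A})$, so $D$ commutes with $XAX$; together with $DX=XD$ this rearranges to $X(DA-AD)X=0$. As $X\in\mathcal{J}$ was arbitrary, property $\mathbb{M}$ forces $DA-AD=0$, so $D$ is central. A direct expansion then shows that centrality makes $(AD)\circ B+A\circ(BD)=2(A\circ B)D$, which vanishes whenever $A\circ B=0$; consequently $\Delta$ still satisfies
\[
A\circ B=0\ \Longrightarrow\ \Delta(A)\circ B+A\circ\Delta(B)=0,
\]
and $\Delta(I)=0$.

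It remains to prove that $\Delta$ is a Jordan derivation, i.e. $\Delta(A^{2})=\Delta(A)\circ A$ for every $A$. Evaluating the relation above on an idempotent $P$ (again through $P\circ(I-P)=0$) already gives $\Delta(P)=\Delta(P)\circ P$, the identity on idempotents. I expect the passage from idempotents and their Peirce corners to an arbitrary $A$ to be the main obstacle: setting $R_{A}:=\Delta(A^{2})-\Delta(A)\circ A$, one must show $XR_{A}X=0$ for all $X\in\mathcal{J}$ by applying the zero-Jordan-product relation to pairs assembled from $X$, $A$ and the idempotents generating $\mathcal{J}$, after which property $\mathbb{M}$ yields $R_{A}=0$. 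This localization is precisely where the structural assumption on $\mathcal{M}$ is indispensable, since the bare hypothesis only constrains $\Delta$ on pairs with vanishing Jordan product.
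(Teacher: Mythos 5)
The paper does not actually prove this lemma; it is quoted from Ghahramani \cite{G2} and used as a black box, so your argument has to stand entirely on its own. Its first two thirds do stand: the expansion of $\delta(P)\circ(I-P)+P\circ\delta(I-P)=0$ into $2\delta(P)-2\delta(P)P-2P\delta(P)+PD+DP=0$, the Peirce compressions giving $PD(I-P)=(I-P)DP=0$ and hence $PD=DP$ for every idempotent $P$, and the upgrade to full centrality of $D=\delta(I)$ by commuting $D$ past $XAX\in\mathcal{J}\subseteq\mathfrak{F}(\mathcal{A})$ and invoking property $\mathbb{M}$ on $X(DA-AD)X=0$ are all correct and cleanly executed. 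The normalized map $\Delta(A)=\delta(A)-AD$ then indeed satisfies the same zero-Jordan-product hypothesis with $\Delta(I)=0$.

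The proof stops, however, exactly where the lemma's real content begins. That a linear map with $\Delta(I)=0$ satisfying the hypothesis is a Jordan derivation is the whole point of the result, and your text only records the idempotent identity $\Delta(P)=\Delta(P)\circ P$ and then announces that ``one must show $XR_AX=0$ \dots by applying the zero-Jordan-product relation to pairs assembled from $X$, $A$ and the idempotents.'' That is a description of the remaining task, not an argument: no pairs with vanishing Jordan product are actually produced, and it is not clear how to produce enough of them, since for generic $A$ and $P$ the hypothesis says nothing about $\phi(A,P)=\Delta(A)\circ P+A\circ\Delta(P)$. The missing ingredient is precisely the linearization result the paper states as Lemma \ref{JL}: a bilinear $\phi$ vanishing on zero Jordan products satisfies $\phi(A,X)=\frac{1}{2}\phi(AX,I)+\frac{1}{2}\phi(XA,I)$ for all $X\in\mathfrak{F}(\mathcal{A})$, which with $\Delta(I)=0$ turns the local condition into the global identity $\Delta(A)\circ X+A\circ\Delta(X)=\Delta(AX+XA)$ on the ideal $\mathcal{J}$; from there the Jordan derivation property is extracted via property $\mathbb{M}$, by the same kind of Peirce/idempotent bookkeeping the paper carries out explicitly for the difference condition in its Lemma 5.3. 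Without this step, or an equivalent substitute, the proof is incomplete.
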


The following auxiliary lemma will be needed frequently later.
\begin{lemma}\label{JL}\cite[Theorem 3.3]{G2}
If $\phi$ is a bilinear mapping from $\mathcal{A}\times\mathcal{A}$ into a vector spaces $\mathcal{X}$ such that
$$A,B \in \mathcal{A}, ~~A\circ B=0 \quad \Longrightarrow \quad \phi(A,B)=0$$
then
$$\phi(A,X)=\frac{1}{2}\phi(AX,I)+\frac{1}{2}\phi(XA,I)$$
for every $A$ in $\mathcal{A}$ and $X$ in $\mathfrak{F}(\mathcal{A})$.
\end{lemma}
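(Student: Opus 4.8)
The plan is to first rewrite the asserted identity in a more symmetric form and then reduce it to the case of a single idempotent. Since $\phi$ is bilinear, the right-hand side equals $\tfrac{1}{2}\phi(AX+XA,I)=\tfrac{1}{2}\phi(A\circ X,I)$, so the claim becomes
\[
\phi(A,X)=\tfrac{1}{2}\phi(A\circ X,I),\qquad A\in\mathcal{A},\ X\in\mathfrak{F}(\mathcal{A}).
\]
I would introduce $S=\{X\in\mathcal{A}:\phi(A,X)=\tfrac{1}{2}\phi(A\circ X,I)\text{ for all }A\in\mathcal{A}\}$. Both sides are linear in $X$, so $S$ is a linear subspace, and $I\in S$ is immediate. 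Because $\mathfrak{F}(\mathcal{A})$ is by definition the subalgebra generated by the idempotents of $\mathcal{A}$, it suffices to prove that $S$ contains every idempotent and is closed under multiplication.

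The idempotent case is the clean heart of the argument and I would treat it by a Peirce decomposition. Fix an idempotent $E$, put $F=I-E$, and split an arbitrary $A$ as $A_{11}=EAE$, $A_{12}=EAF$, $A_{21}=FAE$, $A_{22}=FAF$. The vanishing hypothesis on $\phi$ supplies three relations: from $A_{22}\circ E=0$ one gets $\phi(A_{22},E)=0$; from $A_{11}\circ F=0$ one gets $\phi(A_{11},E)=\phi(A_{11},I)$; and from $A_{12}\circ(2E-I)=0$ and $A_{21}\circ(2E-I)=0$ one gets $\phi(A_{12},E)=\tfrac{1}{2}\phi(A_{12},I)$ and $\phi(A_{21},E)=\tfrac{1}{2}\phi(A_{21},I)$. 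Adding these yields $\phi(A,E)=\phi(A_{11},I)+\tfrac{1}{2}\phi(A_{12},I)+\tfrac{1}{2}\phi(A_{21},I)$. Since $AE=A_{11}+A_{21}$ and $EA=A_{11}+A_{12}$, we have $A\circ E=2A_{11}+A_{12}+A_{21}$, so $\tfrac{1}{2}\phi(A\circ E,I)$ equals the same expression. Hence $E\in S$.

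The remaining, and hardest, task is to promote this from idempotents to the whole generated subalgebra, that is, to show $S$ is closed under products. The basic tool is that for any idempotent $E$ (with $F=I-E$) and any $X\in\mathcal{A}$, the elements $E+EXF$ and $E+FXE$ are again idempotents; applying the already-established identity to them and subtracting the contribution of $E$ shows that every off-diagonal element $EXF$ and $FXE$ lies in $S$. Writing a product of idempotents as $EM$ and decomposing $M$ along $E$, the off-diagonal part $EMF$ is thereby absorbed, and one is reduced to controlling the diagonal corner $EME$. I expect this diagonal contribution to be the main obstacle: unlike the off-diagonal parts, corner elements cannot be recovered as differences of nearby idempotents (the diagonal directions are not tangent to the idempotent variety inside the corner), so they must be handled by an induction on the number of idempotent factors, at each stage feeding perturbed idempotents back into the established identity and exploiting the vanishing condition on $\phi$ to isolate the diagonal piece. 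Once closure under multiplication is secured, $S$ is a subalgebra containing all idempotents, hence contains $\mathfrak{F}(\mathcal{A})$, and the proof is complete.
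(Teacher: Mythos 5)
The paper itself offers no proof of this lemma --- it is imported verbatim from \cite[Theorem 3.3]{G2} --- so there is no in-paper argument to compare you against; I can only judge the proposal on its own terms. Up to a point it is correct and cleanly executed: the reformulation as $\phi(A,X)=\tfrac12\phi(A\circ X,I)$, the observation that the set $S$ of good elements is a linear subspace containing $I$, the Peirce computation for an idempotent $E$ (the four relations $\phi(A_{22},E)=0$, $\phi(A_{11},E)=\phi(A_{11},I)$, $\phi(A_{12},E)=\tfrac12\phi(A_{12},I)$, $\phi(A_{21},E)=\tfrac12\phi(A_{21},I)$ all check out and do sum to $\tfrac12\phi(A\circ E,I)$), and the remark that $E+EXF$ and $E+FXE$ are idempotents, whence $EXF,FXE\in S$. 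This is exactly how such arguments begin.

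The genuine gap is the step you yourself flag as the hardest: passing from idempotents to the subalgebra $\mathfrak{F}(\mathcal{A})$ they generate. You reduce a product $EM$ to the diagonal corner $EME$ and then assert that it ``must be handled by an induction on the number of idempotent factors, at each stage feeding perturbed idempotents back into the established identity,'' but no such argument is given, and as described it does not close up. The perturbation trick only yields off-diagonal elements (as you note, the diagonal directions are not tangent to the set of idempotents), so it cannot reach $EME$; and within the corner algebra $E\mathcal{A}E$ the element $EME$ is not visibly a product of idempotents, so the proposed induction has no smaller case to land on --- testing against $A_{11}\in E\mathcal{A}E$ just reproduces the original problem in the corner. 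Handling this requires genuinely new identities, for instance those obtained by linearizing the vanishing hypothesis along curves of idempotents in both arguments at once (expanding $\phi\bigl((E+sEXF)A(E+sEXF),(F-sEXF)B(F-sEXF)\bigr)=0$ in the scalar $s$ and extracting coefficients); that is precisely the content of \cite[Theorem 3.3]{G2} which your sketch does not supply. Note also that your stated target --- that $S$ be closed under multiplication --- is stronger than what is needed (only $\mathfrak{F}(\mathcal{A})\subseteq S$ is required) and is not obviously true, since membership in $S$ does not compose the way the analogous condition does in the zero-product (non-Jordan) setting.
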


\begin{lemma}\label{lem2}
 Let $\mathcal{A}$ be a unital algebra and $\mathcal{M}$ be a unital $\mathcal{A}$-bimodule with the property $\mathbb{M}$.
 Suppose that $\delta$ is a linear mapping from $\mathcal{A}$ into $\mathcal{M}$ satisfying $$A,B\in\mathcal{A}, A\circ B=0\quad \Longrightarrow \quad  \delta(A)\circ B-A\circ\delta(B)=0.$$
Then $\delta(A)=\frac{1}{2}(A\delta(I)+\delta(I)A)$ for every $A $ in $\mathcal{A}$.
\end{lemma}

%\textcolor{red}{We need $\delta(I)AB+B\delta(I)a+a\tau(I)B+\tau(I)ba=0$.}

\begin{proof}
Define a bilinear mapping $\phi$ from $\mathcal{A}\times \mathcal{A}$ into $\mathcal{M}$ by $\phi(A, B) = \delta(A)\circ B-A\circ\delta(B)$ for every $A, B $ in $\mathcal{A}$. By the definition of $\phi$, it follows that
 $A\circ B = 0$ implies $\phi(A, B)=0$. In account of  Lemma \ref{JL}, for every $A$ in $\mathcal{A}$ and every $X$ in $\mathcal {J}\subset \mathfrak{F}(\mathcal{A})$, we have
\begin{align}
\delta(I)\circ X-I \circ \delta(X)&=\delta(X)\circ I- X \circ \delta(I), \label{5.1} \\
\delta(A)\circ X-A \circ\delta(X)&=\frac{1}{2}[\delta(AX)\circ I-AX \circ\delta(I)+ \delta(XA)\circ I - XA \circ \delta(I)]\nonumber\\
&=\delta(A X+XA)-\frac{1}{2}[(A X+XA)\circ \delta(I)].\label{5.2}
\end{align}
It follows  from (\ref{5.1}) that
\begin{align}\label{5.3}
 X \circ\delta(I)=2\delta(X).
 \end{align}
Since $\mathcal {J}$ is an ideal,
$AX \circ\delta(I)=2\delta(AX)$, $ XA \circ\delta(I)=2\delta(XA)$ for all $A$ in $\mathcal{A}$ and $X$ in $\mathcal {J}$.
Then $\delta(AX+XA)=\frac{1}{2}(A X+XA)\circ\delta(I)$. In view of (\ref{5.2}),
\begin{align}\label{5.4}
A \circ\delta(X)=\delta(A)\circ X
\end{align}
for every $A$ in $\mathcal{A}$ and $X$ in $\mathcal{J}$.

Define a linear mapping $\Delta: \mathcal{A} \to \mathcal{M}$ by $\Delta(A)=\delta(A)-\frac{1}{2}(A\delta(I)+\delta(I)A)$ for every $A $ in $\mathcal{A}$.
It is easy to verify  $\Delta(X)=0= \Delta(AX)=\Delta(XA) $. It is enough to prove that $\Delta=0$.
Using (\ref{5.3}) together with (\ref{5.4}), we arrive at
\begin{align}\label{5.5}
\Delta(A)\circ X&=\delta(A)\circ X-  \frac{1}{2}[\delta(I)AX+A\delta(I)X+X\delta(I)A+XA\delta(I)]\nonumber\\
&= A\circ\delta(X)-\frac{1}{2}[\delta(I)AX+A\delta(I)X+X\delta(I)A+XA\delta(I)]\nonumber\\
&=A\circ\Delta(X) +\frac{1}{2}[\delta(I)XA+X\delta(I)A+A\delta(I)X+AX\delta(I)]\nonumber\\
&\quad  - \frac{1}{2}[\delta(I)AX+A\delta(I)X+X\delta(I)A+XA\delta(I)]\nonumber\\
&=\frac{1}{2}[\delta(I)(XA-AX)+(AX-XA)\delta(I)].
\end{align}
Replacing $A$ by $AX$ and $XA$ respectively in (\ref{5.5}), one can obtain
\begin{align}
0=\Delta(AX)\circ X&=\frac{1}{2}[\delta(I)(XAX-AX^2)+(AX^2-XAX)\delta(I)]\label{5.6}\\
0=\Delta(XA)\circ X&=\frac{1}{2}[\delta(I)(X^2A-XAX)+(XAX-X^2A)\delta(I)]\label{5.7}.
\end{align}
Adding (\ref{5.6}) and (\ref{5.7}), we get
\begin{align}\label{5.8}
0= \delta(I)(X^2A-AX^2) + (AX^2-X^2A ) \delta(I).
\end{align}
In particular, for any idempotent $P$, by (\ref{5.5}) and (\ref{5.8}), we conclude that
\begin{align}
 0& =\delta(I)(PA-AP)+(AP-PA)\delta(I)\nonumber\\
 &=2\Delta(A)\circ P\nonumber\\
 &=2\Delta(A)P+2P\Delta(A).\label{5.9}
\end{align}
Multiplying $P$ on both sides, the left and the right of (\ref{5.9}) respectively,  we arrive at  $0=P\Delta(A)P=P\Delta(A)=\Delta(A)P$ for any idempotent $P$. Hence $0=X\Delta(A)X=\Delta(A)X=X\Delta(A)$ for all $X\in \mathcal{J}$. It follows from property $\mathbb{M}$ that $\Delta(A)=0$ for all $A\in \mathcal{A}$.
\end{proof}
\begin{theorem}\label{0JM}
 Let $\mathcal{A}$ be a unital algebra and $\mathcal{M}$ be a unital $\mathcal{A}$-bimodule with the property $\mathbb{M}$.
 Suppose that $\delta$ and $\tau$ are linear mappings from $\mathcal{A}$ into $\mathcal{M}$ satisfying $$A,B\in\mathcal{A}, A\circ B=0\quad \Longrightarrow \quad  \delta(A)\circ B+A\circ\tau(B)=0.$$
Then there exists a Jordan derivation $\Delta$ from $\mathcal{A}$ into $\mathcal{M}$ such that
$$\delta(A)=\Delta(A)+\delta(I)A, ~~\tau(A)=\Delta(A)+ A\tau(I)$$
for every $A$ in $\mathcal{A}$.
\end{theorem}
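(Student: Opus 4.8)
The plan is to \emph{symmetrize} the hypothesis and thereby reduce the two-mapping problem to the two single-mapping lemmas already available. Since the Jordan product is commutative, $A\circ B=0$ holds if and only if $B\circ A=0$, so the defining relation may be applied to the pair $(A,B)$ and to the pair $(B,A)$ at once. Writing $\sigma=\delta+\tau$ and $\rho=\delta-\tau$, I would add the two instances to obtain, for all $A,B$ with $A\circ B=0$,
\[
\sigma(A)\circ B+A\circ\sigma(B)=\bigl(\delta(A)\circ B+A\circ\tau(B)\bigr)+\bigl(\tau(A)\circ B+A\circ\delta(B)\bigr)=0,
\]
where the first bracket vanishes by the hypothesis for $(A,B)$ and the second vanishes because, by commutativity of $\circ$, it equals $\delta(B)\circ A+B\circ\tau(A)$, which is the hypothesis for $(B,A)$. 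Subtracting the two instances gives, in the same way,
\[
\rho(A)\circ B-A\circ\rho(B)=\bigl(\delta(A)\circ B+A\circ\tau(B)\bigr)-\bigl(\tau(A)\circ B+A\circ\delta(B)\bigr)=0.
\]
Thus $\sigma$ is a Jordan derivable mapping at zero Jordan products, while $\rho$ satisfies exactly the hypothesis of Lemma~\ref{lem2}.

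Next I would feed these into the two lemmas. By Lemma~\ref{lem1} applied to $\sigma$ there is a Jordan derivation $\Delta_0$ with $\sigma(A)=\Delta_0(A)+A\sigma(I)$ and, crucially, $A\sigma(I)=\sigma(I)A$ for every $A$; in particular the element $\sigma(I)=\delta(I)+\tau(I)$ commutes (in the bimodule sense) with all of $\mathcal{A}$. By Lemma~\ref{lem2} applied to $\rho$ one gets $\rho(A)=\tfrac12\bigl(A\rho(I)+\rho(I)A\bigr)$ with $\rho(I)=\delta(I)-\tau(I)$.

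Finally I would recombine via $\delta=\tfrac12(\sigma+\rho)$, $\tau=\tfrac12(\sigma-\rho)$ and set $\Delta(A)=\delta(A)-\delta(I)A$. The centrality of $\sigma(I)$ is what makes the bookkeeping close: expanding $\rho(A)=\tfrac12\bigl(A\rho(I)+\rho(I)A\bigr)$ and using $A\sigma(I)=\sigma(I)A$ yields $\rho(A)=\delta(I)A-A\tau(I)$, whence $\tau(A)-A\tau(I)=\delta(A)-\rho(A)-A\tau(I)=\delta(A)-\delta(I)A=\Delta(A)$, so both prescribed decompositions share the same $\Delta$. To identify $\Delta$ as a Jordan derivation I would combine $\sigma(A)=2\Delta(A)+\delta(I)A+A\tau(I)$ with $\sigma(A)=\Delta_0(A)+A\sigma(I)$, obtaining $2\Delta(A)=\Delta_0(A)+\bigl(A\delta(I)-\delta(I)A\bigr)$. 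Since $A\mapsto A\delta(I)-\delta(I)A$ is an inner derivation from $\mathcal{A}$ into $\mathcal{M}$ (hence a Jordan derivation) and $\tfrac12\Delta_0$ is a Jordan derivation, $\Delta$ is a Jordan derivation, which completes the argument. I expect the one delicate point to be precisely this alignment of the non-derivation parts of $\delta$ and $\tau$: it works only because Lemma~\ref{lem1} supplies the centrality of $\sigma(I)=\delta(I)+\tau(I)$, without which the left multiplier term in $\delta$ and the right multiplier term in $\tau$ could not be absorbed into a single common $\Delta$.
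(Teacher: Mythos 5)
Your proposal is correct and follows essentially the same route as the paper: symmetrize via $\sigma=\delta+\tau$ and $\rho=\delta-\tau$, apply Lemma~\ref{lem1} to $\sigma$ and Lemma~\ref{lem2} to $\rho$, and recombine using the centrality of $\sigma(I)$. Your write-up of the final step is in fact slightly cleaner than the paper's, which defines $\Delta$ by an expression containing the variable $A$; your identification $2\Delta(A)=\Delta_0(A)+A\delta(I)-\delta(I)A$ is the same map, just stated more carefully.
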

\begin{proof}
For all $A,B\in\mathcal{A}$ with $A\circ B=0$,  we have $\delta(A)\circ B+A\circ\tau(B)=0$  and $\delta(B)\circ A+B\circ\tau(A)=0$. Comparing the above equations, we get
\begin{align*}
(\delta+\tau)(A)\circ B+A\circ(\delta+\tau)(B)=0,\\
(\delta-\tau)(A)\circ B-A\circ(\delta-\tau)(B)=0.
\end{align*}
It follows from Lemma \ref{lem1} that there exists a Jordan derivation $\Delta'$ such that $(\delta+\tau)(A)=\Delta'(A)+(\delta+\tau)(I)A$, where $(\delta+\tau)(I)A=A(\delta+\tau)(I)$ for all $A\in \mathcal{A}$.
According to Lemma \ref{lem2}, we obtain $(\delta-\tau)(A)=\frac{1}{2} [A(\delta-\tau)(I)+(\delta-\tau)(I)A]$ for all $A\in \mathcal{A}$.
We conclude that
\begin{align*}
2\delta(A)&=\Delta'(A)+(\delta+\tau)(I) A +\frac{1}{2}[A(\delta-\tau)(I)-(\delta-\tau)(I)A]+(\delta-\tau)(I)A\\
&=\Delta'(A)+\frac{1}{2}[A(\delta-\tau)(I)-(\delta-\tau)(I)A] +2\delta(I)A,\\
2\tau(A)&=\Delta'(A)+A(\delta+\tau)(I)+
\frac{1}{2}[A(\delta-\tau)(I)-(\delta-\tau)(I)A]-A(\delta-\tau)(I)\\
&=\Delta'(A)+\frac{1}{2}[A(\delta-\tau)(I)-(\delta-\tau)(I)A]+2A\tau(I).
\end{align*}
 for all $A$ in $\mathcal{A}$. It is clear that $\Delta:=\frac{1}{2}\Delta' +\frac{1}{4}[A(\delta-\tau)(I)-(\delta-\tau)(I)A]$ is also a Jordan derivation.
\end{proof}

In particular, by Theorem \ref{0JM}, we obtain the following corollary  which generalized  Bahmani and Ghomanjani's result \cite{cen} on zero Jordan product determined algebras.
\begin{corollary}
 Let $\mathcal{A}$ be a unital algebra and $\mathcal{M}$ be a unital $\mathcal{A}$-bimodule with the property $\mathbb{M}$.
 Suppose that $\varphi$ is a linear mapping from $\mathcal{A}$ into $\mathcal{M}$ satisfying
 $$A,B\in\mathcal{A}, A\circ B=0\quad \Longrightarrow \quad  A \circ \varphi(B) =0.$$
Then $\varphi(A)=\varphi(I)A=A\varphi(I)$ for every $A$ in $\mathcal{A}$.
\end{corollary}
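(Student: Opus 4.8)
The plan is to obtain this corollary as a special case of Theorem \ref{0JM}, applied twice. The hypothesis on $\varphi$ is precisely the condition of Theorem \ref{0JM} for the pair $(\delta,\tau)=(0,\varphi)$, since for $A\circ B=0$ one has $0\circ B+A\circ\varphi(B)=A\circ\varphi(B)=0$. Before invoking the theorem, I would first record a symmetry observation: because the Jordan product is commutative, $A\circ B=0$ is equivalent to $B\circ A=0$, so interchanging $A$ and $B$ in the hypothesis and using $B\circ\varphi(A)=\varphi(A)\circ B$ shows that $A\circ B=0$ also forces $\varphi(A)\circ B=0$. Hence $\varphi$ satisfies both one-sided relations $A\circ\varphi(B)=0$ and $\varphi(A)\circ B=0$ whenever $A\circ B=0$.

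Next I would apply Theorem \ref{0JM} to the two pairs $(\delta,\tau)=(0,\varphi)$ and $(\delta,\tau)=(\varphi,0)$; the second pair is admissible precisely because of the symmetry observation, its defining relation being $\varphi(A)\circ B=0$. In the first case the theorem yields a Jordan derivation $\Delta$ with $0=\Delta(A)+\delta(I)A$; since $\delta(I)=0$ this forces $\Delta=0$, and the remaining decomposition gives $\varphi(A)=A\varphi(I)$. In the second case the same reasoning (now with the roles of the two summands exchanged) forces the corresponding Jordan derivation to vanish and gives $\varphi(A)=\varphi(I)A$. Combining the two identities yields $\varphi(A)=\varphi(I)A=A\varphi(I)$ for every $A$, which is the assertion.

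Alternatively, one can bypass the double application and feed $\varphi$ directly into Lemmas \ref{lem1} and \ref{lem2}: the sum of the two one-sided relations gives $\varphi(A)\circ B+A\circ\varphi(B)=0$, placing $\varphi$ under Lemma \ref{lem1} and producing in particular the commutation $A\varphi(I)=\varphi(I)A$, while their difference gives $\varphi(A)\circ B-A\circ\varphi(B)=0$, placing $\varphi$ under Lemma \ref{lem2} and producing $\varphi(A)=\frac{1}{2}(A\varphi(I)+\varphi(I)A)$; these combine to the same conclusion. Either way, the only point that requires thought is the symmetry observation that upgrades the single-sided hypothesis $A\circ\varphi(B)=0$ to the two-sided information demanded by the earlier results; once that is in hand the argument is purely formal, so I do not anticipate any real obstacle.
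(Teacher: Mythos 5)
Your proposal is correct and matches the paper's own argument: the paper likewise applies Theorem \ref{0JM} twice, first with $(\delta,\tau)=(0,\varphi)$ to get $\varphi(A)=A\varphi(I)$ and then, after the same commutativity-of-$\circ$ observation you make, with $(\delta,\tau)=(\varphi,0)$ to get $\varphi(A)=\varphi(I)A$. Your alternative route through Lemmas \ref{lem1} and \ref{lem2} is also sound, but it is essentially the interior of the proof of Theorem \ref{0JM} rather than a genuinely different argument.
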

\begin{proof}
First, taking $\delta=0$ and $\tau=\varphi$ in Theorem \ref{0JM}, we have $\varphi(A)=A\varphi(I)$.
On the other hand, we also have $B \circ \varphi(A)=0$ for all $A, B\in \mathcal{A}$ with $B\circ A=0$. Taking $\delta=\varphi$, $\tau=0$ in Theorem \ref{0JM}, we obtain
$\varphi(A)=\varphi(I)A$.
\end{proof}

By  Theorem \ref{0JM} and the proof of Theorem \ref{0*},
we now state and prove our result in this section.

\begin{corollary} \label{0JM*}
Let $\mathcal{A}$ be a unital $\ast$-algebra and $\mathcal{M}$  be a unital $\ast$-$\mathcal{A}$-bimodule with the property $\mathbb{M}$.
If $\delta $ and $\tau $ are linear mappings from $\mathcal{A}$ into $\mathcal{M}$ satisfying
$$A,B\in \mathcal{A}, A\circ B^*=0 \quad \Longrightarrow \quad \delta(A)\circ B^*+A\circ\tau(B)^*=0,$$
then there exist Jordan derivations $\Delta$ and $\Gamma$ from $\mathcal{A}$ into $\mathcal{M}$ such that $\delta(A)=\Delta(A)+\delta(I)A$, $\tau(A)=\Gamma(A)+ \tau(I)A$, and $\Delta(A^*) = \Gamma(A)^*$  for every $A$ in $\mathcal{A}$.
\end{corollary}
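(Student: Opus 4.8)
The plan is to reduce the starred Jordan identity to the unstarred one of Theorem~\ref{0JM} by the same device used in the proof of Theorem~\ref{0*}, and then transport the resulting Jordan derivation through the involution. First I would introduce the auxiliary linear map $\widehat{\tau}(A)=\tau(A^{*})^{*}$. Given $A,C\in\mathcal{A}$ with $A\circ C=0$, setting $B=C^{*}$ gives $A\circ B^{*}=A\circ C=0$, so the hypothesis yields $\delta(A)\circ C+A\circ\tau(C^{*})^{*}=\delta(A)\circ C+A\circ\widehat{\tau}(C)=0$. Thus $\delta$ and $\widehat{\tau}$ satisfy precisely the hypothesis of Theorem~\ref{0JM}: whenever $A\circ C=0$ one has $\delta(A)\circ C+A\circ\widehat{\tau}(C)=0$.

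By Theorem~\ref{0JM} there is then a single Jordan derivation $\Delta$ from $\mathcal{A}$ into $\mathcal{M}$ with
$$\delta(A)=\Delta(A)+\delta(I)A,\qquad \widehat{\tau}(A)=\Delta(A)+A\,\widehat{\tau}(I)$$
for every $A$, which already gives the stated formula for $\delta$. To recover $\tau$, I would unwind the definition of $\widehat{\tau}$: since $\widehat{\tau}(I)=\tau(I)^{*}$, the second identity reads $\tau(A^{*})^{*}=\Delta(A)+A\tau(I)^{*}$; applying the involution and then replacing $A$ by $A^{*}$ gives $\tau(A)=\Delta(A^{*})^{*}+\tau(I)A$. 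Accordingly I would set $\Gamma(A):=\Delta(A^{*})^{*}$, so that $\tau(A)=\Gamma(A)+\tau(I)A$ and $\Gamma(A)^{*}=\Delta(A^{*})$, i.e.\ $\Delta(A^{*})=\Gamma(A)^{*}$, which is the required compatibility relation.

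It remains to check that $\Gamma$ is a Jordan derivation, and this is the only step carrying any computation, though a routine one. Using that $\Delta$ is a Jordan derivation and that $*$ is a conjugate-linear anti-automorphism compatible with the bimodule action, one computes $\Gamma(A^{2})=\Delta((A^{*})^{2})^{*}=\bigl(\Delta(A^{*})A^{*}+A^{*}\Delta(A^{*})\bigr)^{*}=A\,\Delta(A^{*})^{*}+\Delta(A^{*})^{*}A=A\Gamma(A)+\Gamma(A)A$, so $\Gamma$ is indeed a Jordan derivation. I do not expect a genuine obstacle here: the argument is structurally identical to the proof of Theorem~\ref{0*}, with Theorem~\ref{0JM} playing the role that \cite[Theorem I]{LL} played there and with ``derivation'' everywhere weakened to ``Jordan derivation''. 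The only point requiring mild care is the involution bookkeeping---tracking where $A$ versus $A^{*}$ occurs and keeping the conjugate-linearity of $*$ straight---so that the two output maps end up linked by $\Delta(A^{*})=\Gamma(A)^{*}$ rather than by equality.
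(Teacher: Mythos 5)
Your proposal is correct and follows essentially the same route as the paper's proof: introduce $\widehat{\tau}(A)=\tau(A^*)^*$, apply Theorem~\ref{0JM} to the pair $(\delta,\widehat{\tau})$, and then define $\Gamma(A)=\Delta(A^*)^*=\tau(A)-\tau(I)A$ and check the Jordan derivation property by conjugating the identity for $\Delta$. The involution bookkeeping in your final computation matches the paper's verbatim.
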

\begin{proof}
Define a linear mapping $\widehat{\tau}$ from $\mathcal{A}$ into $\mathcal{M}$ by $\widehat{\tau}(A)=\tau(A^*)^*$, for every $A$ in $\mathcal{A}$. Then
$$A\circ \widehat{\tau}(B)+\delta(A)\circ B=A\circ\tau (B^*)^*+\delta(A)\circ B=0$$
for each $A,B$ in $\mathcal{A}$ with $A\circ B=A\circ B^{**}=0$.
Thus $\delta$ and $\widehat{\tau}$ satisfy the conditions of Theorem \ref{0JM},
there exists a Jordan derivation $\Delta$ from $\mathcal{A}$ into $\mathcal{M}$ such that $$\Delta(A)=\delta(A)-\delta(I)A
=\widehat{\tau}(A)-A\widehat{\tau}(I) .$$
Denote the linear mapping $\Gamma$ from $\mathcal{A}$ into $\mathcal{M}$ by $\Gamma(A)=\tau(A)- \tau(I)A$ for every $A$ in $\mathcal{A}$.
Therefore
$$\Delta(A)=\delta(A)-A\delta(I)
=(\tau(A^*)- \tau(I)A^*)^*=\Gamma(A^*)^*.$$
At the same time,
$$\Gamma(A^2)=\Delta((A^2)^*)^*=\Delta((A^*)^2)^*
=[\Delta(A^*)A^*+A^*\Delta(A^*)]^*=A\Gamma(A)+\Gamma(A)A$$
for every $A$ in $\mathcal{A}$, i.e., $\Gamma$ is a Jordan derivation.
\end{proof}

\begin{remark}
If $\mathcal{A}$ is a unital $\ast$-algebra and $\mathcal{M}$ is a unital $\ast$-$\mathcal{A}$-bimodule with the property $\mathbb{M}$, and $\delta $ is a linear mapping from $\mathcal{A}$ into $\mathcal{M}$. The authors \cite[Theorem 3.6]{AHL2} showed that if $\delta $ satisfies
$$A,B\in \mathcal{A}, A\circ B^*=0 \quad \Longrightarrow \quad \delta(A)\circ B^*+A\circ\delta(B)^*=0,~~\delta(I)A=A\delta(I),$$ then
there exists a Jordan derivation $\Delta$ from $\mathcal{A}$ into $\mathcal{M}$ such that $\delta(A)=\Delta(A)+\delta(I)A$ and $\Delta(A^*) = \Delta(A)^*$, for every $A$ in $\mathcal{A}$. In fact, by Theorem \ref{0JM*}, the above condition $\delta(I)A=A\delta(I)$ is unnecessary if $\delta=\tau$.
\end{remark}

\end{document}